\def \N {\mathbb N}
\def \R {\mathbb R}
\def \Z {\mathbb Z}
\def\boP{\mathbf{P}}
\def\cF{\mathcal{F}}
\def\cT{\mathcal{T}}
\def\cX{\mathcal{X}}
\def\vareps{\varepsilon}
\def\phi{\varphi}
\newcommand{\prob}[1]{\ensuremath{\mathbf{P}\,\big(#1\big)}}
\newcommand{\expect}[1]{\ensuremath{\mathbf{E}\,\big(#1\big)}}
\newcommand{\condprob}[2]{\ensuremath{\mathbf{P}\,\big(#1\mid#2\big)}}
\newcommand{\condexpect}[2]{\ensuremath{\mathbf{E}\,\big(#1\mid#2\big)}}
\newcommand{\ind}[1]{\ensuremath{{1\!\!1}{\{#1\}}}}
\def \toprob {\,\,\buildrel\boP\over\longrightarrow\,\,}
\def\la{\langle}
\def\ra{\rangle}
\renewcommand{\d}{\,\mathrm d}
\def \wt {\widetilde}
\newtheorem {theorem}{Theorem}
\newtheorem {thm}[theorem]{Theorem}
\newtheorem {lemma}{Lemma}
\newtheorem {cor}{Corollary}
\newtheorem {proposition}{Proposition}
\newtheorem {prop}{Proposition}
\newtheorem* {theorem*}{Theorem}
\newtheorem* {thm*}{Theorem}
\newtheorem* {lemma*}{Lemma}
\newtheorem* {lem*}{Lemma}
\newtheorem* {corollary*}{Corollary}
\newtheorem* {cor*}{Corollary}
\newtheorem* {proposition*}{Proposition}
\newtheorem* {prop*}{Proposition}
\newtheorem* {definition*}{Definition}
\newtheorem* {def*}{Definition}
\newtheorem* {remark*}{Remark}
\newtheorem* {rem*}{Remark}
\def\be{\begin{equation}}
\def\ee{\end{equation}}
\def\bea{\begin{eqnarray}}
\def\eea{\end{eqnarray}}
\DeclareMathOperator{\UNI}{UNI}
\def \pell {\ell^+}
\def \mell {\ell^-}
\def \pmell{\ell^{\pm}}
\title{Self-repelling random walk with directed edges on $\Z$}
\author{
{\sc B\'alint T\'oth} \ \ \  and \ \ \ {\sc B\'alint Vet{\H o}}
\\
Institute of Mathematics
\\
Budapest University of Technology
}
\begin{document}

\maketitle

\centerline
{\sl Dedicated to J\'ozsef Fritz on the occasion of
his 65$^{\text{th}}$ birthday.}

\vskip1cm

\begin{abstract}
We consider a variant of self-repelling random walk on the integer lattice
$\Z$ where the self-repellence is defined in terms of the local time on
\emph{oriented} edges. The long-time asymptotic scaling of this walk is
surprisingly different from the asymptotics of the similar process with
self-repellence defined in terms of local time on unoriented edges, examined in
\cite{toth_95}. We prove limit theorems for the local time process and for the
position of the random walker. The main ingredient is a Ray\,--\,Knight-type of
approach. At the end of the paper, we also present some computer simulations
which show the strange scaling behaviour of the walk considered.
\end{abstract}

\section{Introduction}
\label{s:intro}

The \emph{true self-avoiding random walk} on
$\Z$
is a nearest neighbour random
walk, which is locally pushed in the direction of the negative
gradient of its
own local time (i.e.\ occupation time measure). For precise
formulation and
historical background, see
\cite{amit_parisi_peliti_83},
\cite{peliti_pietronero_87},
\cite{obukhov_peliti_83},
\cite{toth_95},
the survey papers
\cite{toth_99},
\cite{toth_01a},
and/or further references cited there. In
\cite{toth_95},
the edge version of the problem was considered, where the walk is
pushed  by the negative gradient of the local time spent on
\emph{unoriented}  edges. There, precise asymptotic limit theorems
were proved for the   local time process and position of the random
walker at late times, under space scaling proportional to the 2/3-rd
power of time. For a survey of these and related results, see
\cite{toth_99},
\cite{toth_01a},
\cite{pemantle_07}.
Similar results for the site version have been obtained recently,
\cite{toth_veto_08b}.
In the present paper, we consider a similar problem but with the walk
being pushed by the local differences of occupation time measures on
\emph{oriented} rather than unoriented edges. The behaviour is
phenomenologically surprisingly  different from the unoriented case:
we prove limit theorems under square-root-of-time (rather than
time-to-the-$\frac23$) space-scaling but the limit laws are not the
usual diffusive ones. Our model belongs to the wider class of
\emph{self-interacting   random walks} which attracted attention in
recent times, see e.g.
\cite{norris_rogers_williams_87},
\cite{durrett_rogers_92},
\cite{cranston_mountford_96},
\cite{toth_werner_98},
\cite{mountford_tarres_08}
for a few other examples. In all these cases long memory of the random
walk or  diffusion is induced by a self-interaction mechanism defined
locally in a natural way in terms of the local time (or occupation
time) process. The main  challenge is to understand the asymptotic
scaling limit (at late times) of the process.

Let
$w$
be a weight function which is non-decreasing and non-constant:
\begin{equation}
\label{growth}
w:\Z\to\R_+,
\qquad
w(z+1)\ge w(z),
\qquad
\lim_{z\to\infty}\big(w(z)-w(-z)\big)>0.
\end{equation}
We will consider a nearest neighbour random walk
$X(n)$,
$n\in\Z_+:=\{0,1,2,\dots\}$,
on the integer lattice
$\Z$,
starting from
$X(0)=0$,
which is governed by its local time process through  the function
$w$
in the following way. Denote by
$\pmell(n,k)$, $(n,k)\in\Z_+\times\Z$,
the local time (that is: its occupation time measure) on oriented
edges:
\begin{equation*}
\pmell(n,k)
:=
\#\{0\le j\le n-1 \,:\, X(j)=k,  \ \ X(j+1)=k\pm1\},
\end{equation*}
where
$\#\{\dots\}$
denotes cardinality of the set. Note that
\begin{equation}
\label{leq}
\pell(n,k)-\mell(n,k+1)=\left\{
\begin{array}{rl}
+1&\text{if}\quad 0\le k < X(n),
\\[8pt]
-1&\text{if}\quad X(n)\le k < 0,
\\[8pt]
0 &\text{otherwise.}
\end{array}
\right.
\end{equation}
We will also use the notation
\begin{equation}
\label{unor}
\ell(n,k)
:=
\pell(n,k)+\mell(n,k+1)
\end{equation}
for the local time spent on the unoriented edge
$\la k,k+1 \ra$.

Our random walk is governed by the  evolution rules
\begin{eqnarray}
\notag
\lefteqn{\condprob{X(n+1)=X(n)\pm1} {{\cF}_n}=}
\\[8pt]
\label{walktransprob}
&&
=
\frac{w(\mp(\pell(n,X(n))-\mell(n,X(n))))}
{w(\pell(n,X(n))-\mell(n,X(n)))+w(\mell(n,X(n))-\pell(n,X(n)))},
\\[8pt]
\notag
\lefteqn{
\pmell(n+1,k)
=
\pmell(t,x) + \ind{X(n)=k, \ \ X(n+1)=k\pm1}.
}
\end{eqnarray}
That is: at each step, the walk prefers to choose that oriented edge
pointing away from the actually occupied site which had been less
visited in the past. In this way balancing or smoothing out the
roughness of the occupation  time measure. We prove limit theorems for
the local time process and for the position of the random walker at
large times under \emph{diffusive   scaling}, that is: essentially for
$n^{-1/2}\ell(n,\lfloor n^{1/2}x\rfloor)$
and
$n^{-1/2}X(n)$,
but with limit laws strikingly different from usual diffusions. See
Theorem
\ref{thmSconv}
and
\ref{thmprocconv}
for precise statement.

The paper is further organized as follows. In Section \ref{s:results},
we formulate the main results. In Section \ref{s:ltconvproof}, we
prove Theorem \ref{thmSconv} about the convergence in sup-norm and in
probability of the local time process stopped at inverse local
times. As a consequence, we also prove convergence in probability of
the inverse local times to \emph{deterministic values}. In Section
\ref{s:procconvproof}, we convert the limit  theorems for the inverse
local times to local limit theorems for the position  of the random
walker at independent random stopping times of geometric distribution
with large expectation. Finally, in Section \ref{s:simulations}, we
present some numerical simulations of the position and local time
processes with particular choices of the weight function
$w(k)=\exp(\beta k)$.

\section{The main results}
\label{s:results}

As in
\cite{toth_95},
the key to the proof is a Ray\,--\,Knight-approach. Let
\begin{equation*}
\label{defT}
T^{\pm}_{j,r}
:=
\min\{n\ge0:\pmell(n,j)\ge r\},
\qquad
j\in\Z,
\quad
r\in\Z_+
\end{equation*}
be the so called inverse local times and
\begin{equation}
\label{Lam}
\Lambda^{\pm}_{j,r}(k)
:=
\ell(T^{\pm}_{j,r},k)
=
\ell^+(T^{\pm}_{j,r},k)
+
\ell^-(T^{\pm}_{j,r},k+1),
\qquad
j,k\in\Z,
\quad
r\in\Z_+
\end{equation}
the local time sequence (on unoriented edges)
of the walk stopped at the inverse local
times. We denote by
$\lambda^{\pm}_{j,r}$
and
$\rho^{\pm}_{j,r}$
the leftmost,  respectively, the  rightmost edges visited by the walk
before the stopping time
$T^{\pm}_{j,r}$:
{\setlength{\arraycolsep}{.13889em}
\begin{eqnarray*}
\lambda^{\pm}_{j,r}
&:=&
\inf\{k\in\Z \,:\,
\Lambda^{\pm}_{j,r}(k)>0\},
\\[8pt]
\rho^{\pm}_{j,r}
&:=&
\sup\{k\in\Z \,:\,
\Lambda^{\pm}_{j,r}(k)>0\}.
\end{eqnarray*}}

The next proposition states that the random walk is recurrent in the
sense that it visits infinitely often every site and edge of
$\Z$.

\begin{prop}
\label{propfin}
Let
$l\in\Z$
and
$m\in\Z_+$
be fixed. We have
\begin{equation*}
\max\left\{
T^{\pm}_{j,r}
\hskip2mm,
\hskip2mm
\rho^{\pm}_{j,r}-\lambda^{\pm}_{j,r}\hskip2mm,
\hskip2mm
\sup_{k}\Lambda^{\pm}_{j,r}(k)
\right\}<\infty
\end{equation*}
almost surely.
\end{prop}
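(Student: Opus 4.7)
I would reduce the proposition to the single assertion that $T^\pm_{j,r}<\infty$ almost surely for every fixed $j\in\Z$ and $r\in\Z_+$. Indeed, on $\{T^\pm_{j,r}=n\}$ the walker has performed only $n$ nearest-neighbour steps, so it has visited at most $n+1$ distinct sites ($\rho^\pm_{j,r}-\lambda^\pm_{j,r}\le n$) and accumulated at most $n$ units of local time on any edge ($\sup_k\Lambda^\pm_{j,r}(k)\le n$). By an easy induction on $r$ it is then enough to prove the two statements: (a) every site of $\Z$ is visited infinitely often almost surely, and (b) from every site, each of the two outgoing oriented edges is traversed infinitely often almost surely.

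My plan for (a) is a conditional Borel-Cantelli argument at a ``boundary'' site. Assume for contradiction that $\P(\sup_n X(n)<\infty)>0$ (the opposite case is symmetric) and, on this event, set $R':=\max\{k\in\Z: X(n)=k\text{ infinitely often}\}$. By maximality, $R'+1$ is visited only finitely often, so there is a finite (possibly zero) time $N$ beyond which every step of the walker out of $R'$ is to $R'-1$. Hence $\ell^+(n,R')$ stabilises at some value $L$ for $n\ge N$, whereas $\ell^-(n,R')\to\infty$. By the monotonicity of $w$ in (\ref{growth}),
\[
\condprob{X(n+1)=R'+1}{\cF_n,X(n)=R'}
=\frac{w(\ell^-(n,R')-L)}{w(\ell^-(n,R')-L)+w(L-\ell^-(n,R'))}\ \ge\ \tfrac12
\]
at every sufficiently late visit. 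Applying conditional Borel-Cantelli along the infinite sequence of visits of the walker to $R'$ beyond $N$ produces a right-step from $R'$ almost surely, contradicting the definition of $N$. Statement (b) follows by exactly the same mechanism: if $\ell^+(\infty,k)<\infty$ for some $k$ visited infinitely often, the Borel-Cantelli estimate at $k$ gives a contradiction.

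The main obstacle is to justify that $R'$ is well-defined at all, i.e.\ that \emph{some} site is visited infinitely often on the event $\{\sup_nX(n)<\infty\}$; the remaining ``drift'' case $X(n)\to-\infty$ is delicate. I would rule it out using (\ref{leq}) as a bookkeeping device: in that scenario $\ell^-(\infty,k+1)=\ell^+(\infty,k)+1$ for every $k\le -1$, so the trajectory contains exactly one unbalanced leftward crossing of each of these edges. Combined with the self-repelling dynamics, which creates a rightward bias at each site as soon as $\ell^-(n,k)-\ell^+(n,k)$ is large, a quantitative estimate of the probability that the walker escapes to the left of $-m$ for all large $m$ should show that this probability tends to zero. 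This is the point at which the specific form of the self-interaction really enters the argument.
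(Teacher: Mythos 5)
Your Borel--Cantelli part is correct and is a genuinely more elementary route than anything in the paper: the paper gives no standalone proof of Proposition~\ref{propfin}, but obtains it as a byproduct of the quantitative Ray--Knight analysis of Section~\ref{s:ltconvproof}. Your argument at the maximal infinitely-often-visited site (and its mirror image), together with the same estimate applied to an oriented edge whose terminal local time would be finite, correctly shows that \emph{on the event that some site is visited infinitely often}, every site and every oriented edge is a.s.\ visited/traversed infinitely often, which indeed gives $T^{\pm}_{j,r}<\infty$ and the other two bounds.

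The genuine gap is exactly where you flag it: the event that \emph{no} site is visited infinitely often, i.e.\ $X(n)\to+\infty$ or $X(n)\to-\infty$. For this case you offer only a plan (``a quantitative estimate \dots should show that this probability tends to zero''), not an argument, and this is the substantive content of the proposition: for general self-interacting walks transience is a live possibility, and the bookkeeping identity \eqref{leq} by itself cannot exclude it --- it is equally consistent with a transient trajectory. Monotonicity of $w$ alone gives you only the qualitative ``restoring force'' you already used; what is needed is a quantitative statement that the restoring force wins against escape, and that is where the second condition in \eqref{growth} enters. The paper supplies this input through the Ray--Knight representation: the profile $k\mapsto L_{j,r}(k)$ of \eqref{Lrepr3}--\eqref{Lrepr4} is driven by independent copies of the chain $\eta$, whose stationary law $\rho$ has mean $-\tfrac12$ by \eqref{mean1/2}; Lemma~\ref{lem:hitexpect} (optional stopping for the resulting negatively drifting chain, after the exponential convergence of Lemma~\ref{expconvlemma}) shows that the profile hits $0$ at an a.s.\ finite position in both directions, which is precisely the statement that the walker is not transient and that $T^{\pm}_{j,r}$, $\rho^{\pm}_{j,r}-\lambda^{\pm}_{j,r}$ and $\sup_k\Lambda^{\pm}_{j,r}(k)$ are finite. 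To complete your route you would have to prove an estimate of comparable strength for the escape event, and any precise version of ``the walker cannot carry exactly one unbalanced crossing across every edge to $-\infty$'' essentially reproduces this drift computation; as it stands, the proposal does not prove the proposition.
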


Actually,
we will see it from the proofs of our theorems that the quantities in
Proposition \ref{propfin} are finite, and much stronger results are
true for them, so we do not give a separate proof of this statement.

\subsection{Limit theorem for the local time process}
\label{ss:ltconv}

The main result concerning the local time process stopped at inverse
local times is the following:

\begin{thm}
\label{thmSconv}
Let
$x\in\R$
and
$h\in\R_+$
be fixed. Then
{\setlength{\arraycolsep}{.13889em}
\begin{eqnarray}
\label{lambdaconv}
A^{-1} \lambda^{\pm}_{\lfloor Ax\rfloor,\lfloor Ah\rfloor}
& \toprob &
-|x|-2h,
\\[8pt]
\label{rhoconv}
A^{-1} \rho^{\pm}_{\lfloor Ax\rfloor,\lfloor Ah\rfloor}
& \toprob &
\phantom{-}
|x|+2h,
\end{eqnarray}}
and
\begin{equation}
\label{Lambdaconv}
\sup_{y\in\R}
\left|
A^{-1} \Lambda^{\pm}_{\lfloor Ax\rfloor,\lfloor Ah\rfloor} (\lfloor Ay\rfloor) -
{\left(|x|-|y|+2h\right)}_+
\right|
\toprob
0
\end{equation}
as
$A\to\infty$.
\end{thm}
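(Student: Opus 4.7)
Following the Ray\,--\,Knight strategy of \cite{toth_95}, I would stop the walk at $T:=T^+_{j,r}$ and study the local-time profile $L_k:=\pell(T,k)$ as a function of $k$; the $T^-$ and $x<0$ cases are obtained by the reflection $X\mapsto -X$, which swaps $\pell$ and $\mell$. For $j\ge0$ we have $X(T)=j+1$, so \eqref{leq} gives $L_k=\mell(T,k+1)$ for $k\ge j+1$ and hence $\Lambda^+_{j,r}(k)=2L_k$ in that range. The first step is the \emph{Markovian reduction}: by \eqref{walktransprob}, the sequence of signs of successive exits from each site $k$, recorded in the order they occur, is a Markov chain on $\Z$ (the ``coin chain at $k$'') started at $0$ with transitions $\Delta\mapsto\Delta\pm1$ of probability $w(\mp\Delta)/(w(\Delta)+w(-\Delta))$, and these chains are mutually independent across sites. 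For $k>j+1$ the walker's last exit from $k$ before $T$ must be to the left, since $X(T)=j+1<k$, so $L_k$ equals the number of $+1$'s in the first $\tau^{(k)}_{L_{k-1}}$ increments of the coin chain at $k$, where $\tau^{(k)}_m$ denotes the time of its $m$-th $-1$. This reduces $(L_k)_{k\ge j}$ to a Markov chain obeying the recursion $L_k=\tau^{(k)}_{L_{k-1}}-L_{k-1}$, absorbed at $0$; the index of absorption is exactly $\rho^+_{j,r}$. At $k=j+1$ the recursion differs by an $O(1)$ correction due to the walker being stopped at $j+1$, which is negligible at scale $A$.

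The second step identifies the drift of this recursion. The transition satisfies $p_+(-s)=p_-(s)$, forcing a symmetric, zero-mean stationary distribution $\pi$; the chain is therefore reversible. A short computation combining this reversibility with the $s\mapsto -s$ symmetry shows that, in the stationary regime, the expected position of the chain immediately after a $-1$ step is exactly $-1/2$. Since $\tau_m-2m=\Delta_{\tau_m}$, we obtain $E[\tau_m-2m]\to-1/2$ as $m\to\infty$, with bounded variance inherited from the tightness of $\Delta_{\tau_m}$. Each step of the recursion therefore decreases $L_k$ by $1/2$ on average with $O(1)$ fluctuations, once $L_{k-1}$ is large enough for the coin chain to enter its stationary regime. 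Iterating, $L_k-L_j+(k-j)/2$ is a mean-zero martingale perturbation bounded in $L^2$ by a constant multiple of $\sqrt{k-j}$, as long as the chain has not approached the absorbing state.

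Standard Doob-type estimates then yield $A^{-1}L_{\lfloor Ay\rfloor}\toprob h-(y-x)/2$ uniformly on compact subintervals of $(x,x+2h)$, while a hitting-time estimate for the Markov chain $(L_k)$ shows that the chain first hits $0$ inside a window of size $o(A)$ around $k=j+\lfloor 2Ah\rfloor$ and remains $0$ afterward; this gives \eqref{rhoconv} and the $y>x$ part of \eqref{Lambdaconv}. The region $y<x$ is treated by the symmetric Ray\,--\,Knight analysis applied to the $\mell$-profile on the left of $j$, with the extra ``$|x|$'' contribution to the width arising from the fact that the walker must first reach $j$ starting from $0$. The main technical hurdle is upgrading the pointwise law of large numbers to the uniform sup-norm convergence \eqref{Lambdaconv}: I would combine monotonicity of $L_k$ in its initial data, the strong Markov property, and a chaining argument, taking extra care near the absorption boundary where the coin chain at the active site has not yet equilibrated and the mean-$(-1/2)$ drift cannot be invoked directly.
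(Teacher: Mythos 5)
Your proposal follows essentially the same route as the paper: the Ray--Knight reduction to independent per-site ``coin chains'' (the paper's $\xi_j$, observed at downward steps to give the chains $\eta_{j,-}$ with stationary mean $-1/2$), a law-of-large-numbers/coupling argument for the bulk of the profile, and a separate hitting-time estimate near the absorption at $0$. Your symmetry/reversibility derivation of the $-1/2$ drift replaces the paper's explicit formula for the stationary measure $\rho$, and your martingale-plus-chaining control of fluctuations replaces the paper's coalescent coupling with stationary copies, but these are technical variants of the same argument and both are sound.
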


Note that
\begin{equation*}
T^\pm_{j,r}
=
\sum_{k=\lambda^\pm_{j,r}}^{\rho^\pm_{j,r}}
\Lambda^\pm_{j,r}(k).
\end{equation*}
Hence, it follows immediately from Theorem \ref{thmSconv} that

\begin{cor}
\label{corTconv}
With the notations of Theorem \ref{thmSconv},
\begin{equation}
\label{Tconv}
A^{-2}T^{\pm}_{\lfloor Ax\rfloor,\lfloor Ah\rfloor}
\toprob
(|x|+2h)^2
\end{equation}
as $A\to\infty$.
\end{cor}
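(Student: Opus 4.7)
The plan is to derive the corollary directly from Theorem \ref{thmSconv} by interpreting the identity
\[
A^{-2} T^{\pm}_{\lfloor Ax\rfloor,\lfloor Ah\rfloor}
=
A^{-1}\sum_{k\in\Z} A^{-1}\Lambda^{\pm}_{\lfloor Ax\rfloor,\lfloor Ah\rfloor}(k)
\]
as a Riemann sum that converges to $\int_\R (|x|-|y|+2h)_+\,\d y = (|x|+2h)^2$. The integral is elementary: setting $M:=|x|+2h$, the integrand is supported on $[-M,M]$ and $\int_{-M}^{M}(M-|y|)\,\d y = M^2$. Thus the target value in \eqref{Tconv} matches what the Riemann sum should produce.

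First I would fix $\eps>0$ and choose a truncation parameter $N > M+1$. By \eqref{lambdaconv} and \eqref{rhoconv}, the event $E_A:=\{\lambda^\pm_{\lfloor Ax\rfloor,\lfloor Ah\rfloor}\ge -AN, \ \rho^\pm_{\lfloor Ax\rfloor,\lfloor Ah\rfloor}\le AN\}$ has probability tending to $1$. On $E_A$, the sum in $T^\pm/A^2$ may be restricted to $|k|\le AN$ without changing its value, since $\Lambda^\pm$ vanishes outside $[\lambda^\pm,\rho^\pm]$.

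Next I would apply \eqref{Lambdaconv}: with probability tending to one,
\[
\sup_{|k|\le AN}\bigl|A^{-1}\Lambda^{\pm}_{\lfloor Ax\rfloor,\lfloor Ah\rfloor}(k) - (M-|k/A|)_+\bigr| < \eps.
\]
Substituting, the error contribution to $A^{-1}\sum_{|k|\le AN}$ is bounded by $2N\eps$, uniformly. The remaining main term $A^{-1}\sum_{|k|\le AN}(M-|k/A|)_+$ is a genuine Riemann sum of a bounded continuous function supported on $[-M,M]\subset[-N,N]$, so it converges deterministically to $\int_{-N}^{N}(M-|y|)_+\,\d y = M^2$ as $A\to\infty$.

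Combining the three ingredients, $A^{-2}T^\pm_{\lfloor Ax\rfloor,\lfloor Ah\rfloor}$ lies within $2N\eps+o(1)$ of $M^2$ on an event of probability tending to one, which gives the claimed convergence in probability. There is no real obstacle here: the only thing to be careful about is that the sup-norm statement \eqref{Lambdaconv} is over all of $\R$, so the truncation to $[-AN,AN]$ together with the endpoint control \eqref{lambdaconv}--\eqref{rhoconv} is needed to turn an a priori infinite sum into a finite Riemann sum before passing to the limit.
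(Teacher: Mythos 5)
Your proof is correct and follows exactly the route the paper intends: it starts from the identity $T^{\pm}_{j,r}=\sum_k \Lambda^{\pm}_{j,r}(k)$ and turns the sum into a Riemann sum controlled by \eqref{lambdaconv}, \eqref{rhoconv} and \eqref{Lambdaconv}. The paper simply declares this step ``immediate''; you have supplied the (correct) truncation and uniform-approximation details.
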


Theorem \ref{thmSconv} and Corollary \ref{corTconv} will be proved in
Section \ref{s:ltconvproof}.

\medskip
\noindent
{\bf Remark:}
Note that the local time process and the inverse local times converge
in probability to deterministic  objects rather than converging weakly
in distribution to genuinely random variables. This makes the present
case somewhat similar to the weakly reinforced random walks studied in
\cite{toth_97}.

\subsection{Limit theorem for the position of the walker}
\label{ss:procconv}

According to the arguments in
\cite{toth_95},
\cite{toth_99},
\cite{toth_01a},
from the limit theorems
\begin{equation*}
A^{-1/\nu}T^{\pm}_{\lfloor Ax\rfloor,\lfloor A^{(1-\nu)/\nu}h\rfloor}
\Rightarrow
\cT_{x,h}
\end{equation*}
valid for any
$(x,h)\in\R\times\R_+$,
one can essentially derive the limit theorem for the one-dimensional
marginals of the position process:
\begin{equation*}
A^{-\nu}X(\lfloor At\rfloor)
\Rightarrow
\cX(t).
\end{equation*}
Indeed, the summation arguments, given in detail in the papers quoted
above, indicate that
\begin{equation*}
\varphi(t,x)
:=
2\frac{\partial}{\partial t} \int_0^\infty \prob{\cT_{x,h}<t} \d h
\end{equation*}
is the good candidate for the the density of the distribution of
$\cX(t)$,
with respect to Lebesgue-measure. The scaling relation
\begin{equation}
\label{piscaling}
A^{1/\nu}\varphi(At,A^{1/\nu}x)
=
\varphi(t,x)
\end{equation}
clearly holds. In some cases (see e.g.\ \cite{toth_95}) it is not trivial to
check that $x\mapsto\varphi(t,x)$ is a bona fide probability density of total
mass $1$. (However, a Fatou-argument easily shows that its total mass is not
more than 1.) But in our present case, this fact drops out from explicit
formulas. Indeed, the weak limits \eqref{Tconv} hold, which, by straightforward
computation, imply
\begin{equation*}
\varphi(t,x)
=
\frac1{2\sqrt t}\ind{|x| \le \sqrt t}.
\end{equation*}

Actually, in order to prove limit theorem for the position of the
random walker, some smoothening in time is needed,  which is realized
through the Laplace-transform. Let
\begin{equation*}
\hat\varphi(s,x)
:=
s\int_0^\infty e^{-st}\varphi(t,x)\d t
=
\sqrt{s\pi}(1-F(\sqrt{2s}|x|))
\end{equation*}
where
\begin{equation*}
F(x):=\frac{1}{\sqrt{2\pi}}
\int_{-\infty}^x e^{-y^2/2}\d y
\end{equation*}
is the standard normal distribution function.

We prove the following \emph{local limit theorem} for the position of
the random walker stopped at an independent geometrically distributed
stopping time of large expectation:

\begin{thm}
\label{thmprocconv}
Let
$s\in\R_+$
be fixed and
$\theta_{s/A}$
a random variable with geometric distribution
\begin{equation}
\label{thetadistr}
\prob{\theta_{s/A}=n}
=
(1-e^{-s/A})e^{-sn/A}
\end{equation}
which is independent of the random walk
$X(n)$.
Then, for almost all
$x\in\R$,
\begin{equation*}
A^{1/2}\prob{X(\theta_{s/A})
=
\lfloor A^{1/2}x\rfloor}\to\hat\varphi(s,x)
\end{equation*}
as
$A\to\infty$.
\end{thm}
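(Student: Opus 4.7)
The plan is to convert the geometric Laplace transform into a sum over the inverse local times $T^{\pm}_{k,m}$ (to which Corollary \ref{corTconv} applies), and then interchange the sum with the limit.

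\textbf{Step 1 --- Identity relating visits and inverse local times.} By Proposition \ref{propfin}, site $k$ is visited infinitely often and every visit is followed by a departure to either $k+1$ or $k-1$. Consequently the visit times of $k$ split as the disjoint union
\begin{equation*}
\{n \ge 0 : X(n) = k\} = \{T^{+}_{k,m}-1 : m \ge 1\} \sqcup \{T^{-}_{k,m}-1 : m \ge 1\}
\end{equation*}
(with the harmless inclusion of $n=0$ when $k=0$). Inserting this in the definition of $\theta_{s/A}$ gives
\begin{equation*}
\prob{X(\theta_{s/A}) = k} = (1-e^{-s/A})\,\expect{\sum_{n\ge 0} e^{-sn/A}\ind{X(n)=k}} = (e^{s/A}-1)\sum_{m\ge 1}\Bigl(\expect{e^{-sT^{+}_{k,m}/A}} + \expect{e^{-sT^{-}_{k,m}/A}}\Bigr).
\end{equation*}

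\textbf{Step 2 --- Pointwise Laplace limit.} Take $k=\lfloor A^{1/2}x\rfloor$ and write $m=\lfloor A^{1/2}h\rfloor$ for $h\ge 0$. Corollary \ref{corTconv}, applied with the rescaled parameter $A'=A^{1/2}$ (so that $A'^{2}=A$), yields
\begin{equation*}
\frac{T^{\pm}_{\lfloor A^{1/2}x\rfloor,\,\lfloor A^{1/2}h\rfloor}}{A} \toprob (|x|+2h)^{2}.
\end{equation*}
Since $u\mapsto e^{-su}$ is bounded and continuous on $\R_{+}$, bounded convergence gives $\expect{e^{-sT^{\pm}_{k,m}/A}}\to e^{-s(|x|+2h)^{2}}$.

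\textbf{Step 3 --- Riemann sum and identification.} Multiplying the identity of Step 1 by $A^{1/2}$ and using $A^{1/2}(e^{s/A}-1)=sA^{-1/2}(1+o(1))$ transforms the right-hand side into a Riemann sum of mesh $A^{-1/2}$ for the integrand $2e^{-s(|x|+2h)^{2}}$, giving the candidate limit
\begin{equation*}
2s\int_{0}^{\infty} e^{-s(|x|+2h)^{2}}\,dh = s\int_{|x|}^{\infty} e^{-su^{2}}\,du = \sqrt{s}\int_{|x|\sqrt{s}}^{\infty}e^{-v^{2}}\,dv = \sqrt{s\pi}\bigl(1-F(\sqrt{2s}\,|x|)\bigr) = \hat\varphi(s,x),
\end{equation*}
where the substitutions $u=|x|+2h$ and $v=u\sqrt{s}$ are used and the final equality is the definition of $\hat\varphi$.

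\textbf{Step 4 --- Main obstacle.} The delicate point is justifying the Riemann sum convergence in Step 3 uniformly in $A$. Pointwise convergence of the summand is clear, and the map $m\mapsto T^{\pm}_{k,m}$ is monotone (hence so is $h\mapsto\expect{e^{-sT^{\pm}_{k,\lfloor A^{1/2}h\rfloor}/A}}$), which reduces the problem, via a truncation at some large $H$, to showing that
\begin{equation*}
\expect{e^{-sT^{\pm}_{k,\lfloor A^{1/2}h\rfloor}/A}} \le \psi(h) \quad \text{with } \int_{H}^{\infty}\psi < \eps,
\end{equation*}
uniformly in $A$. In turn this demands a uniform lower-tail estimate of the form $T^{\pm}_{k,\lfloor A^{1/2}h\rfloor}/A \ge c h^{2}$ with high probability for large $h$. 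Such a bound should be extractable from Theorem \ref{thmSconv} --- the unoriented local time profile stopped at $T^{\pm}$ is, with high probability, close to the tent function $(|x|-|y|+2h)_{+}$, so summing over $y$ forces $T^{\pm}$ to be of order $h^{2}A$ --- or alternatively by a direct coupling/monotonicity argument with a simpler random walk; this is the main technical ingredient, and once it is in place Dini's theorem on $[0,H]$ together with dominated convergence on $[H,\infty)$ closes the proof.
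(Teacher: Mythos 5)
Your Steps 1--3 are essentially the paper's setup: the paper uses the arrival decomposition $\{n:X(n)=k\}=\{T^+_{k-1,m}\}\cup\{T^-_{k+1,m}\}$ rather than your departure decomposition, but this is immaterial; the rewriting of the geometric expectation as $\frac{1-e^{-s/A}}{s/A}\int_0^\infty\bigl(\hat\rho^+_A+\hat\rho^-_A\bigr)\d h$ with $\hat\rho^\pm_A(s,x,h)=s\,\expect{\exp(-\tfrac sA T^\pm_{\lfloor A^{1/2}x\rfloor,\lfloor A^{1/2}h\rfloor})}$, and the pointwise limit $\hat\rho^\pm_A\to se^{-s(|x|+2h)^2}$ via Corollary \ref{corTconv} with $A'=A^{1/2}$, are exactly what the paper does. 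The computation identifying $2s\int_0^\infty e^{-s(|x|+2h)^2}\d h$ with $\hat\varphi(s,x)$ is also correct.

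The genuine gap is your Step 4, which you correctly flag as the crux but do not close. The dominated-convergence route requires a bound $\expect{e^{-sT^\pm_{k,\lfloor A^{1/2}h\rfloor}/A}}\le\psi(h)$ with $\psi$ integrable at infinity, \emph{uniformly in $A$}; this amounts to a quantitative lower-tail estimate $\prob{T^\pm_{k,\lfloor A^{1/2}h\rfloor}\le c h^2 A}\le\eps(h)$ uniform in $A$ with $\int^\infty\eps(h)\d h<\infty$. Theorem \ref{thmSconv} and Corollary \ref{corTconv} are fixed-$(x,h)$ convergence-in-probability statements and do not supply such uniformity in $h$; the trivial bound $T^\pm_{k,m}\ge m$ only gives $e^{-sh/A^{1/2}}\to1$, which is useless. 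So as written the interchange of limit and integral is unjustified. The paper avoids this entirely with a Scheff\'e-type device: Fatou's lemma in $h$ gives $\liminf_{A}\hat\varphi_A(s,x)\ge\hat\varphi(s,x)$ for every $x$; since $\int_\R\hat\varphi_A(s,x)\d x=1=\int_\R\hat\varphi(s,x)\d x$, a second application of Fatou (now in $x$) forces $\liminf_A\hat\varphi_A(s,x)=\hat\varphi(s,x)$ for almost every $x$, and running the same argument along arbitrary subsequences upgrades the $\liminf$ to a limit. This is precisely why the theorem is stated only for almost all $x$ --- a restriction your dominated-convergence strategy, if it could be completed, would not need, but which the paper accepts in exchange for never having to prove the uniform tail bound. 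You should either supply that tail estimate (nontrivial: it needs a lower bound on $T^\pm$ that holds with summable failure probability uniformly over the whole range of $h$ and $A$) or switch to the double-Fatou argument.
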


From the above local limit theorem, the integral limit theorem follows
immediately:
\begin{equation*}
\lim_{A\to\infty}
\prob{A^{-1/2}X(\theta_{s/A})<x}
=
\int_{-\infty}^x
\hat\varphi(s,y)\d y.
\end{equation*}

From \eqref{lambdaconv} and \eqref{rhoconv}, the tightness of the
distributions $(A^{-1/2}X(\lfloor At\rfloor))_{A\ge1}$ follows easily. Theorem
\ref{thmprocconv} yields that if the random walk
$X(\cdot)$
has any scaling limit, then
\begin{equation}
\label{detlim}
A^{-1/2}X(\lfloor At\rfloor)
\Longrightarrow
\UNI(-\sqrt t,\sqrt t)
\end{equation}
as
$A\to\infty$
holds where
$\UNI(-\sqrt t, \sqrt t)$
stands for the uniform distribution on the interval
$(-\sqrt t, \sqrt t)$.

The proof of Theorem \ref{thmprocconv} is presented in Section
\ref{s:procconvproof}.

\section{Proof of Theorem \ref{thmSconv}}
\label{s:ltconvproof}

The proof is organized as follows. We introduce independent auxiliary
Markov-chains associated to the vertices of $\Z$ in such a way that
the value of the local time at the edges can be expressed with a sum
of such Markov-chains. It turns out that the auxiliary Markov-chains
converge exponentially fast to their common unique stationary
distribution. It allows us to couple the local time process of the
self-repelling random walk with the sum of i.i.d.\ random
variables. The coupling yields that the law of large numbers for
i.i.d.\ variables can be applied for the behaviour of the local time,
with high probability. The coupling argument breaks down when the
local time approaches
$0$.
We show in Subsection \ref{ss:hit0}, how to handle this case.

Let
\begin{equation}
\label{Ldef}
L_{j,r}(k)
:=
\pell(T_{j,r}^+,k).
\end{equation}
Mind that due to \eqref{leq}, \eqref{unor} and \eqref{Lam}
\begin{equation}
\label{kisdiff}
\big|
\Lambda^+_{j,r}(k)-2L_{j,r}(k)
\big|
\le1.
\end{equation}
We give the proof of \eqref{lambdaconv}, \eqref{rhoconv} and
\begin{equation*}
\sup_{y\in\R}
\left|
A^{-1}L_{\lfloor Ax\rfloor,\lfloor Ah\rfloor}(\lfloor Ay\rfloor)
-
\left(\frac{|x|-|y|}{2}+h\right)_+
\right|
\toprob
0
\end{equation*}
as
$A\to\infty$,
which, due to \eqref{kisdiff}, implies \eqref{Lambdaconv} for
$\Lambda^+$.
The case of
$\Lambda^-$
can be done similarly. Without loss of generality, we can suppose that
$x\le0$.

\subsection{Auxiliary Markov-chains}
\label{subs:aux}

First we define the $\Z$-valued Markov-chain
$l\mapsto\xi(l)$ with the
following transition probabilities:
{\setlength{\arraycolsep}{.13889em}
\begin{align}
\label{defp}
\condprob{\xi(l+1)=x+1}{\xi(l)=x}
&=
\frac{w(-x)}{w(x)+w(-x)}
=:
p(x),
\\[1ex]
\label{defq}
\condprob{\xi(l+1)=x-1}{\xi(l)=x}
&=
\frac{w(x)}{w(x)+w(-x)}
=:
q(x).
\end{align}}

Let
$\tau_{\pm}(m)$, $m=0,1,2,\dots$
be the stopping times of consecutive
upwards/downwards steps of
$\xi$:
\begin{equation*}
\tau_\pm(0):=0,
\qquad
\tau_\pm(m+1)
:=
\min\left\{l>\tau_\pm(m):\xi(l)=\xi(l-1)\pm1\right\}.
\end{equation*}
Then, clearly, the processes
\begin{equation*}
\label{etadef}
\eta_+(m)
:=
-\xi(\tau_+(m)),
\qquad
\eta_-(m)
:=
+\xi(\tau_-(m))
\end{equation*}
are themselves Markov-chains on $\Z$.
Due to the $\pm$ symmetry of the process $\xi$, the Markov-chains $\eta_+$
and $\eta_-$ have the same law. In the present subsection, we simply
denote them by $\eta$ neglecting the subscripts $\pm$.
The transition probabilities of this process are
\begin{equation}
\label{defP}
P(x,y)
:=
\condprob{\eta(m+1)=y}{\eta(m)=x}
=
\left\{
\begin{array}{ll}
\prod_{z=x}^y p(z)q(y+1)
&
\mbox{if}\quad y\ge x-1,
\\[8pt]
0
&
\mbox{if}\quad y<x-1.
\end{array}\right.
\end{equation}

In the following lemma, we collect the technical ingredients of the
forthcoming proof of our limit theorems. We identify  the stationary
measure of the Markov-chain $\eta$, state exponential tightness of the
distributions of
$\big(\eta(m)\,\big|\,\eta(0)=0\big)$ uniformly in $m$
and exponentially fast convergence to
stationarity.

\begin{lemma}
\label{expconvlemma}
\begin{enumerate}[(i)]
\item
The unique stationary measure of the Markov-chain
$\eta$ is
\begin{equation}
\label{defrho}
\rho(x)
=
Z^{-1}
\prod_{z=1}^{\lfloor|2x+1|/2\rfloor}
\frac{w(-z)}{w(z)}
\quad\text{with}\quad
Z:=
2\sum_{x=0}^\infty
\prod_{z=1}^x\frac{w(-z)}{w(z)}.
\end{equation}
\item
There exist constants 
$C<\infty$, $\beta>0$ 
such that for all
$m\in\N$
and 
$y\in\Z$
\begin{equation}
\label{expotight}
P^m(0,y)\le C e^{-\beta |y|}.
\end{equation}
\item
There exist constants
$C<\infty$
and
$\beta>0$
such that for all
$m\ge0$
\begin{equation}
\label{expconv}
\sum_{y\in\Z}
\left|
P^m(0,y)-\rho(y)
\right|
<
C e^{-\beta m}.
\end{equation}
\end{enumerate}
\end{lemma}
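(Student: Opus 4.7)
The plan is to treat $\eta$ as a subsampling of the reversible nearest-neighbour chain $\xi$ at its upward-jump times, reading off the stationary measure from that of $\xi$, and then to derive exponential tightness and exponential ergodicity from a combination of tail estimates on the kernel $P$ and a standard Foster\,--\,Lyapunov drift argument.

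For part (i), by reversibility of $\xi$ the invariant measure $\mu$ satisfies detailed balance $\mu(z)\, p(z) = \mu(z+1)\, q(z+1)$. Setting $\nu(z) := \mu(z)\, p(z)$ and using $p(-z) = q(z)$, one computes $\nu(z+1)/\nu(z) = w(-z-1)/w(z+1)$ together with the reflection symmetry $\nu(-z-1) = \nu(z)$. A Palm-type identity (the rate at which $\xi$ makes an upward step out of state $z$ is proportional to $\mu(z)\, p(z) = \nu(z)$) identifies $\rho(y) \propto \mu(-y-1)\, p(-y-1) = \nu(y)$, which after normalisation is exactly \eqref{defrho}. As a cross-check one may verify the balance equation $\sum_x \rho(x)\, P(x, y) = \rho(y)$ directly: using $P(x,y) = q(y+1) \prod_{z=x}^{y} p(z)$, it collapses, via the recursion $A(y) := \sum_{x \le y} \rho(x) \prod_{z=x}^{y} p(z) = p(y)\,[\rho(y) + A(y-1)]$, to the identity $\rho(y+1) = \rho(y)/q(y+1) - p(y)\, \rho(y-1)/q(y)$, which is routine to check. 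Finiteness of $Z$ is where \eqref{growth} enters: since $w$ is non-decreasing with $\lim_{z \to \infty}(w(z) - w(-z)) > 0$, the ratio $w(-z)/w(z)$ stays below some $r < 1$ for all $z$ large enough, so $\prod_{z=1}^{x} w(-z)/w(z)$ decays geometrically in $x$, making $\rho$ a probability measure with exponential tails.

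For part (ii), the same tail estimate gives $p(z) \le r < 1$ for $z$ beyond a threshold, so the one-step kernel $P(x, y) = q(y+1) \prod_{z=x}^{y} p(z)$ itself has exponential tails in $y$ on both sides (the decay as $y \to -\infty$ comes from $q(y+1) \to 0$, by the monotonicity of $w$). To pass to $P^m$ I would split $P^m(0, y) \le \rho(y) + |P^m(0, y) - \rho(y)|$: the first term already has geometric tails from (i), and the second is bounded by $C e^{-\beta m}$ uniformly in $y$ by part (iii). For bounded $m$, exponential tails survive finitely many convolutions at the cost of a slightly smaller rate, so the two regimes together yield \eqref{expotight}.

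For part (iii), apply the Foster\,--\,Lyapunov criterion for geometric ergodicity with Lyapunov function $V(x) := e^{\alpha |x|}$, $\alpha > 0$ small. For large positive $x$, the chain steps down to $x-1$ with probability $q(x)$ bounded strictly above $1/2$, while the exponential moment of the unbounded upward jump converges provided $\alpha < \beta_0$, where $\beta_0$ is the geometric rate at which $p(z)$ is bounded below $1$; this yields a strict contraction $PV(x) \le \gamma\, V(x)$ for $x \gg 0$. For large negative $x$, downward jumps are skip-free of unit size while $q(x)$ is small, so the drift is strictly positive towards the origin. Any bounded set is small for the chain (any prescribed reference state can be reached from a bounded region in a bounded number of steps with uniformly positive probability), so the Meyn\,--\,Tweedie theorem delivers \eqref{expconv}. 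The main technical obstacle is the calibration of $\alpha$ against the tail rate of the unbounded upward jumps: the exponential moment $PV(x)$ is finite only for $\alpha$ strictly less than $\beta_0$, and the resulting contraction parameter $\gamma < 1$ needs a careful quantitative choice of $\alpha$, controlled through the derivative at $\alpha = 0$ of the expression $q(x) e^{-\alpha} + \sum_{k \ge 0} P(x, x+k) e^{\alpha k}$. Once this is fixed, all three items of the lemma follow together with consistent constants.
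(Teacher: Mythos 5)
Your plan is workable, but it takes a genuinely different route from the paper's on all three items, and in a different logical order, so let me compare. For (i), the paper does not pass through reversibility of $\xi$ or a Palm identity: it rewrites $P(x,y)=\rho(x)^{-1}\bigl(p(x)\prod_{z=x+1}^{y+1}q(z)\bigr)\rho(y)$ and verifies $\sum_x\rho(x)P(x,y)=\rho(y)$ by the telescoping identity $\sum_{x\le y+1}p(x)\prod_{z=x+1}^{y+1}q(z)=1$, which rests on the single observation $\lim_{x\to-\infty}\prod_{u=x}^{y+1}q(u)=0$. That observation is precisely the boundary condition your cross-check is missing: your recursion for $A(y)$ reduces balance at $y$ to balance at $y-1$, i.e.\ an induction on $\Z$ with no anchor; and the Palm argument, while correct and conceptually illuminating (it explains where \eqref{defrho} comes from), needs positive recurrence of $\xi$ to be justified. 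For (ii) the paper has a one-line proof you should not bypass: stationarity gives $\rho(y)\ge\rho(0)P^m(0,y)$, hence $P^m(0,y)\le\rho(y)/\rho(0)\le Ce^{-\beta|y|}$ uniformly in $m$. Your split $P^m(0,y)\le\rho(y)+|P^m(0,y)-\rho(y)|$ by itself gives nothing for fixed $m$ and large $|y|$, since the second term does not decay in $y$; your two-regime patch (Chernoff for $m\le c|y|$, total variation for $m\ge c|y|$) does repair this, but it makes (ii) depend on (iii) --- admissible, since your Meyn--Tweedie proof of (iii) does not use (ii), but note the paper goes the other way and reuses the constant $\beta$ of \eqref{expotight} in \eqref{thetazerobound}. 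For (iii) the paper invokes Nummelin's criterion (a finite exponential moment of the return time $\theta_0$) and verifies it by stochastically dominating the excursions of $\eta$ above and below $0$ by sums of i.i.d.\ geometric variables, via \eqref{stdom} and \eqref{stdom2}; your Foster--Lyapunov route with $V(x)=e^{\alpha|x|}$ is the standard equivalent and needs exactly the same two estimates. One caution there: the negative drift for large positive $x$ does not follow from ``$q(x)$ bounded above $1/2$'' alone; you need the quantitative tail bound $\condprob{\eta(1)\ge x+k}{\eta(0)=x}=\prod_{z=x}^{x+k}p(z)\le p_{\max}^{k+1}$ with $p_{\max}:=\sup_{z\ge x_0}p(z)<1/2$, which yields a mean increment at most $(2p_{\max}-1)/(1-p_{\max})<0$; only then is the derivative at $\alpha=0$ of your generating function uniformly negative. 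In summary: your approach buys a structural explanation of the stationary measure and a self-contained geometric-ergodicity argument, at the cost of heavier machinery and of losing the paper's very short proof of (ii).
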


\medskip
\noindent
{\bf Remark on notation:}
We shall use the generic notation
\begin{equation*}
\text{\tt something}  \le C e^{-\beta Y}
\end{equation*}
for exponentially strong bounds. The constants $C<\infty$ and $\beta>0$ will
vary at different occurrences and they may (and will) depend on various fixed
parameters but of course not on quantities appearing in the expression $Y$.
There will be no cause for confusion.

\medskip

Note that for any choice of the weight function $w$
\begin{equation}
\label{mean1/2}
\sum_{x=-\infty}^{+\infty}x\rho(x)=-\frac12.
\end{equation}

\begin{proof}
[Proof of Lemma \ref{expconvlemma}]
The following proof is reminiscent of the proof of Lemmas 1 and 2
from \cite{toth_95}. It is somewhat streamlined and weaker conditions
are assumed.

(i) The irreducibility of the Markov-chain $\eta$ is straightforward. One can
easily rewrite \eqref{defP}, using \eqref{defrho}, as
\begin{equation*}
P(x,y)
=
\left\{
\begin{array}{ll}
\frac1{\rho(x)}\left(p(x)\prod_{z=x+1}^{y+1}
q(z)\right)\rho(y) & \mbox{if}\quad y\ge x-1,
\\[8pt]
0 & \mbox{if}\quad y<x-1.
\end{array}\right.
\end{equation*}
It yields that
$\rho$
is indeed stationary distribution for
$\eta$,
because
\begin{equation*}
\sum_{x\in\Z}\rho(x)P(x,y)
=
\left(\sum_{x\le y+1}
p(x)\prod_{z=x+1}^{y+1}
q(z)\right)\rho(y)=\rho(y)
\end{equation*}
where the last equality holds, because
$\lim_{z\to-\infty}\prod_{u=z}^{y+1}q(u)=0$.

(ii)
The stationarity of
$\rho$
implies that
\begin{equation}
\label{Pest}
P^n(0,y)
\le
\frac{\rho(y)}{\rho(0)}
=
\prod_{z=1}^{\lfloor|2y+1|/2\rfloor}\frac{w(-z)}{w(z)}
\le
Ce^{-\beta |y|}.
\end{equation}
The exponential bound follows from \eqref{growth}.
As a consequence, we get finite expectations
in the forthcoming steps of the proofs below.

(iii)
Define the stopping times
{\setlength{\arraycolsep}{.13889em}
\begin{eqnarray*}
\theta_+&=\min\{n\ge0:\eta(n)\ge0\},
\\[8pt]
\theta_0\,&=\min\{n\ge0:\eta(n)=0\}.
\end{eqnarray*}}

From Theorem 6.14 and Example 5.5(a) of
\cite{nummelin_84},
we can conclude the
exponential convergence \eqref{expconv}, if for some $\gamma>0$
\begin{equation}
\label{Eexpsigma}
\condexpect{\exp(\gamma\theta_0)}{\eta(0)=0}<\infty
\end{equation}
holds.

The following decomposition is true, because the Markov-chain
$\eta$
can jump at most one step to the left.
\begin{equation}
\label{sigmafelb}
\begin{split}
&
\condexpect{\exp(\gamma\theta_0)}{\eta(0)=0}
=
e^{\gamma}\sum_{y\ge0} P(0,y)
\condexpect{\exp(\gamma\theta_0)}{\eta(0)=y}
\\[8pt]
&
+e^{\gamma}P(0,-1)\sum_{y\ge0}
\condexpect{\exp(\gamma\theta_+)\ind{\eta(\theta_+)=y}}{\eta(0)=-1}
\condexpect{\exp(\gamma\theta_0)}{\eta(0)=y}.
\end{split}
\end{equation}

One can easily check that, given
$\eta(0)=-1$,
the random variables
$\theta_+$ and $\eta(\theta_+)$
are independent, and for
$y\ge0$
\begin{equation}
\label{thetafelb}
\condexpect{\exp(\gamma\theta_+)
\ind{\eta(\theta_+)=y}} {\eta(0)=-1}
=
\frac{P(0,y)}{1-P(0,-1)}
\condexpect{\exp(\gamma\theta_+)}{\eta(0)=-1}.
\end{equation}

Combining \eqref{sigmafelb} and \eqref{thetafelb} gives us
\begin{equation}
\label{sigmaatiras}
\begin{split}
&
\condexpect{\exp(\gamma\theta_0)}{\eta(0)=0}
\\[8pt]
&
=
e^{\gamma} \sum_{y\ge0} P(0,y)
\condexpect{\exp(\gamma\theta_0)}{\eta(0)=y}
\left(
1+\frac{P(0,-1)}{1-P(0,-1)}
\condexpect{\exp(\gamma\theta_+)}{\eta(0)=-1}
\right).
\end{split}
\end{equation}

So, in order to get the result, we need to prove that for properly
chosen $\gamma>0$
\begin{equation}
\label{thetaplusbound}
\condexpect{\exp(\gamma\theta_+)}{\eta(0)=-1}<\infty
\end{equation}
and
\begin{equation}
\label{thetazerobound}
\condexpect{\exp(\gamma\theta_0)}{\eta(0)=y}\le C e^{\frac{\beta}{2} y}
\qquad
\text{ for }y\in\Z_+
\end{equation}
where $\beta$ is the constant in \eqref{expotight}.

In order to make the argument shorter, we make the assumption
\begin{equation*}
w(-1)<w(+1),
\end{equation*}
or, equivalently,
\begin{equation*}
p(1)=\frac{w(-1)}{w(+1)+w(-1)}
<\frac12<
\frac{w(+1)}{w(+1)+w(-1)}=q(1).
\end{equation*}
The proof can be easily extended for the weaker assumption \eqref{growth}, but
the argument is somewhat longer.

First, we prove \eqref{thetaplusbound}.
Let $x<0$ and $x-1\le y< 0$. Then the following stochastic
domination holds:
\begin{equation}
\label{stdom}
\sum_{z\ge y} P(x,z)
=
\prod_{z=x}^y p(z)
\ge
p(-1)^{y-x+1}
=
q(1)^{y-x+1}.
\end{equation}
Let $\zeta(r)$, $r=1,2,\dots$ be i.i.d.\ random variables
with geometric law:
\begin{equation*}
\prob{\zeta = z} = q(1)^{z+1}p(1),
\quad
z=-1,0,1,2,\dots,
\end{equation*}
and
\begin{equation*}
\wt\theta
:=
\min
\big\{
t\ge 0: \sum_{s=1}^t \zeta(s)\ge 1
\big\}.
\end{equation*}
Note that $\expect{\zeta}>0$.
From the stochastic domination \eqref{stdom}, it follows that for any 
$t\ge0$
\begin{equation*}
\condprob{\theta_+>t}{\eta(0)=-1}
\le
\prob{\wt\theta>t},
\end{equation*}
and hence
\begin{equation*}
\condexpect{\exp(\gamma\theta_+)}{\eta(0)=-1}
\le
\expect{\exp(\gamma\wt\theta)}
<
\infty
\end{equation*}
for sufficiently small $\gamma>0$.

Now, we turn to \eqref{thetazerobound}.
Let now $0\le x-1 \le y$. In this case, the following stochastic
domination is true:
\begin{equation}
\label{stdom2}
\sum_{z\ge y} P(x,z)
=
\prod_{z=x}^yp(z)\le p(1)^{y-x+1}.
\end{equation}
Let now $\zeta(r)$, $r=1,2,\dots$ be i.i.d.\ random variables
with geometric law:
\begin{equation*}
\prob{\zeta = z} = p(1)^{z+1}q(1),
\quad
z=-1,0,1,2,\dots,
\end{equation*}
and for $y\ge 0$
\begin{equation*}
\wt\theta_y
:=
\min
\big\{
t\ge 0: \sum_{s=1}^t \zeta(s)\le -y
\big\}.
\end{equation*}
Note that now $\expect{\zeta}<0$.
From the stochastic domination \eqref{stdom2}, it follows now that with
$y\ge0$, for any $t\ge0$
\begin{equation*}
\condprob{\theta_0>t}{\eta(0)=y}
\le
\prob{\wt\theta_y>t},
\end{equation*}
and hence
\begin{equation*}
\condexpect{\exp(\gamma\theta)} {\eta(0)=y}
\le
\expect{\exp(\gamma\wt\theta_y)}
\le
Ce^{\frac{\beta}{2}y},
\end{equation*}
for sufficiently small $\gamma>0$.
\end{proof}

\subsection{The basic construction}

For $j\in\Z$, denote the inverse local times
(times of jumps leaving site $j\in\Z$)
\begin{equation}
\label{gamdef}
\gamma_j(l)
:=
\min\left\{n:\pell(n,j)+\mell(n,j)\ge l\right\},
\end{equation}
and
\begin{align}
\label{xijdef}
&
\xi_j(l)
:=
\pell(\gamma_j(l),j)-\mell(\gamma_j(l),j),
\\[8pt]
\label{taujdef}
&
\tau_{j,\pm}(0):=0,
\qquad
\tau_{j,\pm}(m+1)
:=
\min\left\{l>\tau_{j,\pm}(m):\xi_j(l)=\xi_j(l-1)\pm1\right\},
\\[8pt]
\label{etajdef}
&
\eta_{j,+}(m)
:=
-\xi_j(\tau_{j,+}(m)),
\qquad
\eta_{j,-}(m)
:=
+\xi_j(\tau_{j,-}(m)).
\end{align}

The following proposition is the key to the Ray\,--\,Knight-approach.

\begin{proposition}
\begin{enumerate}[(i)]
\item
The processes $l\mapsto\xi_j(l)$, $j\in\Z$, are independent copies of the
Markov-chain $l\mapsto\xi(l)$, defined in Subsection \ref{subs:aux}, starting
with initial conditions $\xi_j(0)=0$.
\item
As a consequence: the processes $k\mapsto\eta_{j,\pm}(k)$, $j\in\Z$, are
independent copies of the Markov-chain $m\mapsto\eta_{\pm}(m)$, starting with
initial conditions $\eta_{j,\pm}(0)=0$.
\end{enumerate}
\end{proposition}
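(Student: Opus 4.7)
The plan is to realize $X(\cdot)$ via an explicit coupling with an i.i.d.\ family of uniform $[0,1]$ random variables indexed by (site, departure-count); once this is in place, both the Markov structure within each $\xi_j$ and the cross-site independence will fall out of the locality of the update rule at each site.

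Concretely, take $\{U_{j,l}:j\in\Z,\,l\ge 1\}$ to be i.i.d.\ uniform on $[0,1]$, living on an auxiliary probability space. Construct the walk recursively by $X(0)=0$ and the rule that whenever $X(n)=j$ with $\pell(n,j)+\mell(n,j)=l-1$, one sets $X(n+1)=j+1$ if $U_{j,l}\le p\bigl(\pell(n,j)-\mell(n,j)\bigr)$ and $X(n+1)=j-1$ otherwise. Since $\pell(n,j)-\mell(n,j)=\xi_j(l-1)$ at this moment, comparison of \eqref{defp}, \eqref{defq} with \eqref{walktransprob} shows that this reproduces the prescribed transition law of the self-repelling walk.

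The crucial observation is then that each $\xi_j$ is a measurable function of the single-site subfamily $\{U_{j,l}\}_{l\ge 1}$ alone. Indeed, $\xi_j$ is constant between successive departures from $j$, because $\pell(n,j)$ and $\mell(n,j)$ change only when the walker is at $j$. At the $(l{+}1)$-st departure, the single coin $U_{j,l+1}$ is consulted, producing
\begin{equation*}
\xi_j(l+1)=\begin{cases}\xi_j(l)+1 & \text{if }U_{j,l+1}\le p(\xi_j(l)),\\ \xi_j(l)-1 & \text{otherwise,}\end{cases}
\qquad \xi_j(0)=0.
\end{equation*}
This is exactly the transition rule of the Markov chain $\xi$ of Subsection \ref{subs:aux}; hence each $\xi_j$ is a copy of $\xi$ started at $0$. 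Independence across $j$ then comes for free, since the subfamilies $\{U_{j,\cdot}\}$ and $\{U_{j',\cdot}\}$ are independent for $j\ne j'$. This proves (i), and (ii) is immediate because $\eta_{j,\pm}$ is a deterministic functional of $\xi_j$ via \eqref{taujdef}--\eqref{etajdef}, so independence and distributional identity transfer verbatim.

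No step is technically hard. The one point that deserves attention is that the physical $\xi_j$ in \eqref{xijdef} is a priori defined only up to the number of departures the walker actually makes from $j$, whereas the coupled chain driven by $\{U_{j,l}\}_{l\ge 1}$ runs forever; the two agree on the common domain, and the recurrence established later (Proposition \ref{propfin}) ensures the common domain is all of $\N$.
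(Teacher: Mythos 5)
Your argument is correct and is exactly the standard formalization of the locality argument that the paper states informally and whose formal proof it omits (deferring to \cite{toth_95}): an explicit coupling with i.i.d.\ uniforms indexed by site and departure count, so that each $\xi_j$ becomes a measurable function of its own column $\{U_{j,l}\}_{l\ge1}$, which gives both the Markov property and the cross-site independence at once. One small caution: invoking Proposition \ref{propfin} to guarantee that the physical $\xi_j$ of \eqref{xijdef} is defined on all of $\N$ is latently circular (the paper derives recurrence from the theorems whose proofs rest on the present Proposition), but your construction already sidesteps this, since the chains driven by the full families of uniforms are defined for every $l$, are the objects actually used in the sequel, and agree with the physical quantities wherever the latter make sense.
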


The statement is intuitively clear. The mathematical content of the driving
rules \eqref{walktransprob} of the random walk $X(n)$ is exactly this: whenever
the walk visits a site $j\in\Z$, the probability of jumping to the left or to
the right (i.e.\ to site $j-1$ or to site $j+1$), conditionally on the whole
past, will depend only on the difference of the number of past jumps from $j$
to $j-1$, respectively, from $j$ to $j+1$, and independent of what had happened
at other sites.  The  more lengthy formal proof goes through exactly the same
steps as the corresponding statement in  \cite{toth_95}. We omit here the
formal proof.

Fix now $j\in\Z_-$ and $r\in\N$. The definitions
\eqref{Ldef}, \eqref{gamdef}, \eqref{xijdef},
\eqref{taujdef} and \eqref{etajdef} imply that
\begin{align}
\label{Lrepr1}
&
L_{j,r}(j)=r
\\[8pt]
\label{Lrepr2}
&
L_{j,r}(k+1)=L_{j,r}(k)+1+\eta_{k+1,-}(L_{j,r}(k)+1),&
j
&\le
k<0,
\\[8pt]
\label{Lrepr3}
&
L_{j,r}(k+1)=L_{j,r}(k)+\eta_{k+1,-}(L_{j,r}(k)),
&
0
&\le
k<\infty,
\\[8pt]
\label{Lrepr4}
&
L_{j,r}(k-1)=L_{j,r}(k)+\eta_{k,+}(L_{j,r}(k)),
&
-\infty
&<
k\le j.
\end{align}
Similar formulas are found for $j\in\Z_+$ and $r\in\N$.

Note that if
$L_{j,r}(k_0)=0$ for some $k_0\ge0$ (respectively, for some $k_0\le j$) then
$L_{j,r}(k)=0$ for all $k\ge k_0$ (respectively, for all $k\le k_0$).

The idea of the further steps of proof can be summarized in terms of
the above setup. With fixed
$x\in\R_-$ and $h\in\R_+$,
we choose
$j=\lfloor Ax\rfloor$ and $r=\lfloor Ah\rfloor$
with the scaling parameter
$A\to\infty$
at the end. We know from Lemma \ref{expconvlemma} that the
Markov-chains
$\eta_{j,\pm}$
converge
exponentially fast to their stationary distribution $\rho$.
This allows us to couple efficiently the increments
$L_{\lfloor Ax\rfloor,\lfloor Ah\rfloor}(k+1)-L_{\lfloor Ax\rfloor,\lfloor
Ah\rfloor}(k)$
with properly chosen i.i.d.\ random variables as long as the value of
$L_{\lfloor Ax\rfloor,\lfloor Ah\rfloor}(k)>A^{1/2+\vareps}$
and to use the law of large numbers.
This coupling does not apply when the value of
$L_{\lfloor Ax\rfloor,\lfloor Ah\rfloor}(k)<A^{1/2+\vareps}$.
We prove that once the value of
$L_{\lfloor Ax\rfloor,\lfloor Ah\rfloor}(k)$
drops below this threshold,
$L_{\lfloor Ax\rfloor,\lfloor Ah\rfloor}(k)$
hits zero (and sticks there) in
$o(A)$
time, with high probability. These steps of the proof are presented in
the next two subsections.

\subsection{Coupling}

We are in the context of the representation
\eqref{Lrepr1}, \eqref{Lrepr2}, \eqref{Lrepr3}, \eqref{Lrepr4} with
$j=\lfloor Ax\rfloor$, $r=\lfloor Ah\rfloor$.
Due to  Lemma \ref{expconvlemma}, we can realize jointly the
\emph{pairs of coupled processes}
\begin{equation}
\big(m\mapsto(\eta_{k,-}(m), \wt\eta_k(m))\big)_{k> j},
\qquad
\big(m\mapsto(\eta_{k,+}(m), \wt\eta_k(m))\big)_{k\le j}
\end{equation}
with the following properties.
\begin{enumerate}[--]

\item
The pairs of coupled processes with different $k$-indices are
independent.

\item
The processes $\big(m\mapsto\eta_{k,-}(m)\big)_{k>j}$ and
$\big(m\mapsto\eta_{k,+}(m)\big)_{k\le j}$ are those of the previous
subsection. I.e.\ they are independent copies of the Markov-chain
$m\mapsto\eta(m)$ with initial conditions $\eta_{k,\pm}(0)=0$.

\item
The processes
$\big(m\mapsto\wt\eta_{k}(m)\big)_{k\in\Z}$
are independent copies of the \emph{stationary} process
$m\mapsto\eta(m)$. I.e.\ these processes are initialized independently
with $\prob{\wt\eta_k(0)=x}=\rho(x)$ and run independently of one
another.

\item
The pairs of coupled processes
$m\mapsto(\eta_{k,\pm}(m), \wt\eta_k(m))$ are coalescing. This means
the following: we define the coalescence time
\begin{equation}
\mu_k:=\inf\{m\ge0:\eta_{k,\pm}(m)=\wt\eta_k(m)\}.
\end{equation}
Then, for $m\ge\mu_k$, the two processes stick together:
$\eta_{k,\pm}(m)=\wt\eta(m)$. Mind that the random variables
$\mu_k$, $k\in\Z$ are i.i.d.

\item
The tail of the distribution of the coalescence times decays
exponentially fast:
\begin{equation}
\label{coaltail}
\prob{\mu_k>m}<C e^{-\beta m}.
\end{equation}

\end{enumerate}

We define the processes $k\mapsto\wt L_{j,r}(k)$ similarly to the
processes $k\mapsto L_{j,r}(k)$ in
\eqref{Lrepr1}, \eqref{Lrepr2}, \eqref{Lrepr3}, \eqref{Lrepr4},
with the $\eta$-s replaced by the $\wt \eta$-s:
\begin{align*}
&
\wt L_{j,r}(j)=r
\\[8pt]
&
\wt L_{j,r}(k+1)=\wt L_{j,r}(k)+1+\wt\eta_{k+1,-}(\wt L_{j,r}(k)+1),
&
j
&\le
k<0,
\\[8pt]
&
\wt L_{j,r}(k+1)=\wt L_{j,r}(k)+\wt \eta_{k+1,-}(\wt L_{j,r}(k)),
&
0
&
\le
k<\infty,
\\[8pt]
&
\wt L_{j,r}(k-1)=\wt L_{j,r}(k)+\wt \eta_{k,+}(\wt L_{j,r}(k)),
&
-\infty
&<
k\le j.
\end{align*}
Note that the increments of this process are \emph{independent} with
distribution
\begin{align*}
&
\prob{\wt L_{j,r}(k+1)-\wt L_{j,r}(k)=z}=
\rho(z-1),
&
j
&\le
k<0,
\\[8pt]
&
\prob{\wt L_{j,r}(k+1)-\wt L_{j,r}(k)=z}=
\rho(z),
&
0
&\le
k<\infty,
\\[8pt]
&
\prob{\wt L_{j,r}(k-1)-\wt L_{j,r}(k)=z}=
\rho(z),
&
-\infty
&<
k\le j.
\end{align*}
Hence, from \eqref{mean1/2}, it follows that for any $K<\infty$
\begin{equation}
\label{coupledconv}
\sup_{|y|\le K}
\left|
A^{-1}\wt L_{\lfloor Ax\rfloor,\lfloor Ah\rfloor}(\lfloor Ay\rfloor)
-
\big((|x|-|y|)/2+h\big)
\right|
\toprob
0.
\end{equation}
Actually, by Doob's inequality, the following large deviation estimate
holds: for any $x\in\R$, $h\in\R_+$ and $K<\infty$ fixed
\begin{equation}
\label{largedev}
\prob{
\sup_{|k|\le AK}
\left|
\wt L_{\lfloor Ax\rfloor,\lfloor Ah\rfloor}(k)
-
\big((A|x|-|k|)/2+Ah\big)
\right|
>A^{1/2+\vareps}
}
<
C e^{-\beta A^{2\vareps}}.
\end{equation}
(The constants $C<\infty$ and $\beta>0$ do depend on the fixed
parameters $x$, $h$ and $K$.)
Denote now
\begin{align*}
&
\kappa_{j,r}^+
:=
\min\{k\ge j: L_{j,r}(k)\not=\wt L_{j,r}(k)\},
\\[8pt]
&
\kappa_{j,r}^-
:=
\max\{k\le j: L_{j,r}(k)\not=\wt L_{j,r}(k)\}.
\end{align*}
iThen, for $k\ge j$:
\begin{equation}
\label{kapp+diff}
\begin{split}
&
\prob{\kappa_{j,r}^+\le k+1}
-
\prob{\kappa_{j,r}^+ \le k}
=
\\[8pt]
&
\quad
=
\prob{\kappa_{j,r}^+ = k+1, \ \ \wt L_{j,r}(k)\le   A^{1/2+\vareps}}
+
\prob{\kappa_{j,r}^+ = k+1, \ \ \wt L_{j,r}(k)\ge   A^{1/2+\vareps}}
\\[8pt]
&
\quad
\le
\prob{\wt L_{j,r}(k)\le   A^{1/2+\vareps}}
+
\condprob{\kappa_{j,r}^+=k+1}
{\kappa_{j,r}^+>k, \ \ L_{j,r}(k)=\wt L_{j,r}(k)\ge
  A^{1/2+\vareps}}.
\end{split}
\end{equation}
Similarly, for $k\le j$:
\begin{equation}
\label{kapp-diff}
\begin{split}
&
\prob{\kappa_{j,r}^-\ge k-1}
-
\prob{\kappa_{j,r}^- \ge k}
=
\\[8pt]
&
\quad
=
\prob{\kappa_{j,r}^- = k-1, \ \ \wt L_{j,r}(k)\le   A^{1/2+\vareps}}
+
\prob{\kappa_{j,r}^- = k-1, \ \ \wt L_{j,r}(k)\ge   A^{1/2+\vareps}}
\\[8pt]
&
\quad
\le
\prob{\wt L_{j,r}(k)\le  A^{1/2+\vareps}}
+
\condprob{\kappa_{j,r}^-=k-1}
{\kappa_{j,r}^-<k, \ \ L_{j,r}(k)=\wt L_{j,r}(k)\ge
  A^{1/2+\vareps}}.
\end{split}
\end{equation}
Now, from \eqref{largedev}, it follows that  for
$|k| \le A(|x|+2h)-4A^{1/2+\vareps}$
\begin{equation}
\label{ldappl}
\prob{\wt L_{j,r}(k)\le   A^{1/2+\vareps}}
\le
C e^{-\beta A^{2\vareps}}.
\end{equation}
On the other hand, from \eqref{coaltail},
\begin{align}
\label{tail+appl}
&
\condprob{\kappa_{j,r}^+=k+1}
{\kappa_{j,r}^+>k, \ \ L_{j,r}(k)=\wt L_{j,r}(k)\ge   A^{1/2+\vareps}}
\le
C e^{-\beta   A^{1/2+\vareps}},
\\[8pt]
\label{tail-appl}
&
\condprob{\kappa_{j,r}^-=k-1}
{\kappa_{j,r}^-<k, \ \ L_{j,r}(k)=\wt L_{j,r}(k)\ge  A^{1/2+\vareps}}
\le
C e^{-\beta  A^{1/2+\vareps}}
\end{align}
with some constants $C<\infty$ and $\beta>0$, which do depend on all fixed
parameters and may vary from formula to formula.

Putting together \eqref{kapp+diff}, \eqref{ldappl}, \eqref{tail+appl},
respectively, \eqref{kapp-diff}, \eqref{ldappl}, \eqref{tail-appl}
and noting that $\prob{\kappa_{j,r}^+=j}=0$, we conclude that
\begin{align}
\label{stick}
&
\prob{
\min\big\{|k|:
L_{\lfloor Ax\rfloor,\lfloor Ah\rfloor}(k)
\not=
\wt L_{\lfloor Ax\rfloor,\lfloor Ah\rfloor}(k)
\big\}
\le
A(|x|+2h)-4A^{1/2+\vareps}
}
\le
C A e^{-\beta A^{2\vareps}},
\\[8pt]
\label{small}
&
\prob{L_{\lfloor Ax\rfloor,\lfloor Ah\rfloor}
(\pm\lfloor A(|x|+2h)-4A^{1/2+\vareps}\rfloor)
\ge 3 A^{1/2+\vareps}
}
\le
C  e^{-\beta A^{2\vareps}}.
\end{align}

\subsection{Hitting of $0$}
\label{ss:hit0}

It follows from Lemma \ref{expconvlemma}
that all moments of the distributions
$P^n(0,\cdot)$
converge to the corresponding moments  of
$\rho$.
In particular, for any $\delta>0$ there exists $n_\delta<\infty$, such
that
\begin{equation*}
\sum_{x\in\Z}P^n(0,x)x
\le
-\frac1{2+\delta}
\end{equation*}
holds if
$n\ge n_\delta$.

Consider now the Markov-chains defined by \eqref{Lrepr3} or \eqref{Lrepr4} (the
two are identical in law):
\begin{equation*}
L(k+1)=L(k)+\eta_{k+1}(L(k)),
\qquad
L(0)=r\in\N,
\end{equation*}
where $m\mapsto\eta_k(m)$, $k=1,2,3,\dots$ are i.i.d.\ copies of the
Markov-chain $m\mapsto\eta(m)$ with initial conditions $\eta_k(0)=0$. Define
the stopping times
\begin{equation*}
\tau_x:=\min\{k:L(k)\le x\},
\qquad
x=0,1,2,\dots.
\end{equation*}

\begin{lemma}
\label{lem:hitexpect}
For any $\delta>0$ there exists  $K_\delta<\infty$ such that for any
$r\in\N$:
\begin{equation*}
\condexpect{\tau_0}{L(0)=r}
\le
(2+\delta) r +K_\delta.
\end{equation*}

\begin{proof}
Clearly,
\begin{equation*}
\condexpect{\tau_0}{L(0)=r}
\le
\condexpect{\tau_{n_\delta}}{L(0)=r}
+
\max_{0\le s \le n_\delta}
\condexpect{\tau_0}{L(0)=s}.
\end{equation*}
Now, by optional stopping,
\begin{equation*}
\condexpect{\tau_{n_\delta}}{L(0)=r}
\le
(2+\delta) r,
\end{equation*}
and obviously,
\begin{equation*}
K_\delta:=
\max_{0\le s \le n_\delta}
\condexpect{\tau_0}{L(0)=s}
<\infty.
\end{equation*}
\end{proof}
\end{lemma}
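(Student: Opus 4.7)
The plan is to split $\tau_0$ at the first time $\tau_{n_\delta}$ the chain descends to level $n_\delta$, and to use the strong Markov property to write
\[
\condexpect{\tau_0}{L(0)=r}\le\condexpect{\tau_{n_\delta}}{L(0)=r}+\max_{0\le s\le n_\delta}\condexpect{\tau_0}{L(0)=s}.
\]
The first term will supply the linear factor $(2+\delta)r$ and the second will serve as the additive constant $K_\delta$. For $r\le n_\delta$ the statement is trivial, so I assume $r>n_\delta$ from here on.

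For the first term I would exploit the negative drift at large heights. By Lemma~\ref{expconvlemma}(ii) and (iii), the distributions $P^n(0,\cdot)$ converge in total variation to $\rho$ exponentially fast and possess uniformly exponentially decaying tails; consequently their first moments converge to $\sum_x x\rho(x)=-\tfrac12$. Hence one can pick $n_\delta$ so that $\sum_x xP^n(0,x)\le-1/(2+\delta)$ for every $n\ge n_\delta$, which translates into the drift bound $\condexpect{L(k+1)-L(k)}{L(k)}\le -1/(2+\delta)$ whenever $L(k)\ge n_\delta$. Consequently $M(k):=L(k\wedge\tau_{n_\delta})+(k\wedge\tau_{n_\delta})/(2+\delta)$ is a nonnegative supermartingale. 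Optional stopping — justified by truncating at $k\wedge N$ and invoking monotone convergence, the uniform exponential tails of the increments controlling the $L^1$ limit — then gives $\condexpect{\tau_{n_\delta}}{L(0)=r}\le(2+\delta)r$ after dropping the nonnegative boundary term $L(\tau_{n_\delta})$.

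For the second term it suffices to show that $\condexpect{\tau_0}{L(0)=s}<\infty$ for each $s$ in the finite set $\{0,\ldots,n_\delta\}$; the maximum over this finite set is then the desired $K_\delta$. This is the step I expect to be most delicate: starting from a small $s$ the chain may excurse to arbitrarily large values, because the mean of $\eta_1(s)$ need not be negative for small $s$, so one must close the loop without circularity. The exponential tail bound \eqref{expotight} makes $L(1)$ integrable with a bound uniform in $s\in\{0,\ldots,n_\delta\}$; applying the supermartingale argument of the previous paragraph to an excursion reaching height $L(1)>n_\delta$ gives an expected return time to $\{0,\ldots,n_\delta\}$ bounded by $(2+\delta)L(1)$, which after integration against the exponentially decaying distribution of $L(1)$ remains finite. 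Since $0$ is absorbing and every state in $\{0,\ldots,n_\delta\}$ reaches $0$ in a bounded number of steps with probability bounded away from zero, a geometric number of such excursions suffices before absorption, which yields the required finiteness.
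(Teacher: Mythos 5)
Your proof is correct and follows essentially the same route as the paper: the same decomposition of $\tau_0$ at $\tau_{n_\delta}$, the same choice of $n_\delta$ from the convergence of the first moments of $P^n(0,\cdot)$ to $-\tfrac12$, and the same optional-stopping bound $\condexpect{\tau_{n_\delta}}{L(0)=r}\le(2+\delta)r$. The only difference is that you spell out the supermartingale construction and give an explicit excursion-plus-geometric-trials argument for the finiteness of $\max_{0\le s\le n_\delta}\condexpect{\tau_0}{L(0)=s}$, which the paper dismisses as obvious; both of these added details are sound.
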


In particular, choosing $\delta=1$ and applying  Markov's inequality,
it follows that
\begin{equation}
\label{hitright}
\begin{split}
&
\condprob{\rho^+_{\lfloor Ax\rfloor,\lfloor Ax\rfloor}>
 A(|x|+2h)+A^{1/2+2\vareps}}
{L_{\lfloor Ax\rfloor,\lfloor Ah\rfloor}
(\lfloor A(|x|+2h)-4A^{1/2+\vareps}\rfloor)
\le 3  A^{1/2+\vareps}}
\\[8pt]
&
\hskip3cm
\le
\frac{9  A^{1/2+\vareps} + K_1}{5A^{1/2+2\vareps}}
<
2 A^{-\vareps},
\end{split}
\end{equation}
and similarly,
\begin{equation}
\label{hitleft}
\begin{split}
&
\condprob{\lambda^+_{\lfloor Ax\rfloor,\lfloor Ax\rfloor}<
-A(|x|+2h)-A^{1/2+2\vareps}}
{L_{\lfloor Ax\rfloor,\lfloor Ah\rfloor}
(-\lfloor A(|x|+2h)+4A^{1/2+\vareps}\rfloor)
\le 3  A^{1/2+\vareps}}
\\[8pt]
&
\hskip3cm
\le
\frac{9  A^{1/2+\vareps} + K_1}{5A^{1/2+2\vareps}}
<
2 A^{-\vareps}.
\end{split}
\end{equation}
Eventually, Theorem \ref{thmSconv} follows from
\eqref{coupledconv},
\eqref{stick},
\eqref{small},
\eqref{hitright} and
\eqref{hitleft}.

\section{Proof of the theorem for the position of the random walker}
\label{s:procconvproof}

First, we introduce the following notations. For
$n\in\N$
and
$k\in\Z$,
let
\begin{equation*}
P(n,k)
:=
\prob{X(n)=k}
\end{equation*}
be the distribution of the position of the random walker. For
$s\in\R_+$,
\begin{equation}
\label{Rdef}
R(s,k)
:=
(1-e^{-s})\sum_{n=0}^\infty e^{-sn}P(n,k)
\end{equation}
is the distribution of
$X(\theta_s)$
where
$\theta_s$
has geometric distribution \eqref{thetadistr} and it is independent of
$X(n)$.

Also \eqref{piscaling} tells us that the proper definition of the
rescaled distribution is
\begin{equation*}
\varphi_A(t,x)
:=
A^{1/2}P(\lfloor At\rfloor,\lfloor A^{1/2}x\rfloor),
\end{equation*}
if
$t\in\R_+$
and
$x\in\R$.
Let
\begin{equation}
\label{pihatAdef}
\hat\varphi_A(s,x)
:=
A^{1/2}R(A^{-1}s,\lfloor A^{1/2}x\rfloor),
\end{equation}
which is asymptotically the Laplace-transform of
$\pi_A$
as
$A\to\infty$.

With these definitions, the statement of Theorem \ref{thmprocconv} is
equivalent to
\begin{equation*}
\hat\varphi_A(s,x)\to\hat\varphi(s,x),
\end{equation*}
which is proved below.

We will need the Laplace-transform
\begin{equation*}
\hat\rho(s,x,h)
=
s\,
\expect{\exp\left(-s\,\cT_{x,h}\right)}
=
se^{-s(|x|+2h)^2},
\end{equation*}
for which
\begin{equation*}
\hat\varphi(s,x)
=
2\int_0^\infty\hat\rho(s,|x|,h)\d h
\end{equation*}
holds.

\begin{proof}
[Proof of Theorem \ref{thmprocconv}]
Fix
$x\ge0$.
We can proceed in the case
$x\le0$
similarly. We start with the identity
\begin{equation}
\label{starteq}
P(n,k)
=
\prob{X_n=k}
=
\sum_{m=0}^\infty
\left(
\prob{T^+_{k-1,m}=n}
+
\prob{T^-_{k+1,m}=n}
\right),
\end{equation}
which is easy to check. From the definitions \eqref{Rdef} and
\eqref{pihatAdef},
\begin{equation}
\label{pihatA1}
\begin{split}
\hat\varphi_A(s,x)
&=
\frac{1-e^{-s/A}}{s/A}s\sum_{m=0}^\infty
\frac1{\sqrt A}e^{-ns/A}P(n,\lfloor A^{1/2}x\rfloor)
\\[8pt]
&=
\frac{1-e^{-s/A}}{s/A}s\sum_{m=0}^\infty \frac1{\sqrt A}
\left(\expect{e^{-s/A T^+_{\lfloor A^{1/2}x\rfloor-1,m}}}+\expect{e^{-s/A
T^-_{\lfloor A^{1/2}x\rfloor+1,m}}}\right),
\end{split}
\end{equation}
where we used \eqref{starteq} in the second equality. Let
\begin{equation*}
\hat\rho^\pm_A(s,x,h)
=
s\,\expect{\exp\big(-\frac sA T^\pm_{\lfloor A^{1/2}x\rfloor,\lfloor
A^{1/2}h\rfloor}\big)}.
\end{equation*}
Then \eqref{pihatA1} can be written as
\begin{equation}
\label{pihatA2}
\hat\varphi_A(s,x)=\frac{1-e^{-s/A}}{s/A}\int_0^\infty
\left(\hat\rho^+_A(s,x-A^{-1/2},h)+\hat\rho^-_A(s,x+A^{-1/2},h)\right)\d
h.
\end{equation}

It follows from \eqref{Tconv} that for all
$s>0$, $x\in\R$
and
$h>0$,
\begin{equation*}
\hat\rho^\pm_A(s,x,h)\to\hat\rho(s,x,h)
\end{equation*}
as
$A\to\infty$.
Applying Fatou's lemma in \eqref{pihatA2} yields
\begin{equation*}
\liminf_{A\to\infty}\hat\varphi_A(s,x)\ge2\int_0^\infty
\hat\rho(s,x,h)\d h
=
\hat\varphi(s,x).
\end{equation*}
If we use Fatou's lemma again, we get
\begin{equation*}
1=\int_{-\infty}^\infty \hat\varphi(s,x)\d x
\le
\int_{-\infty}^\infty \liminf_{A\to\infty}\hat\varphi_A(s,x)\d x
\le
\liminf_{A\to\infty} \int_{-\infty}^\infty \hat\varphi_A(s,x)\d x=1,
\end{equation*}
which gives for all $s\in\R$ that
\begin{equation}
\label{liminfpi}
\hat\varphi(s,x)=\liminf_{A\to\infty}\hat\varphi_A(s,x)
\end{equation}
holds for almost all
$x\in\R$.
Note that \eqref{liminfpi} is also true for any subsequence
$A_k\to\infty$,
which implies the assertion of Theorem
\ref{thmprocconv}.
\end{proof}

\section{Computer simulations}
\label{s:simulations}

We have prepared computer simulations with exponential weight functions
$w(k)=2^k$
and
$w(k)=10^k$.

Note that the limit objects in our theorems do not depend on the
choice of the weight function
$w$.
Therefore, we expect that the behaviour of the local time and the
trajectories is qualitatively similar, and we will find only
quantitative differences.

\begin{figure}
\centering
\includegraphics[width=200pt]{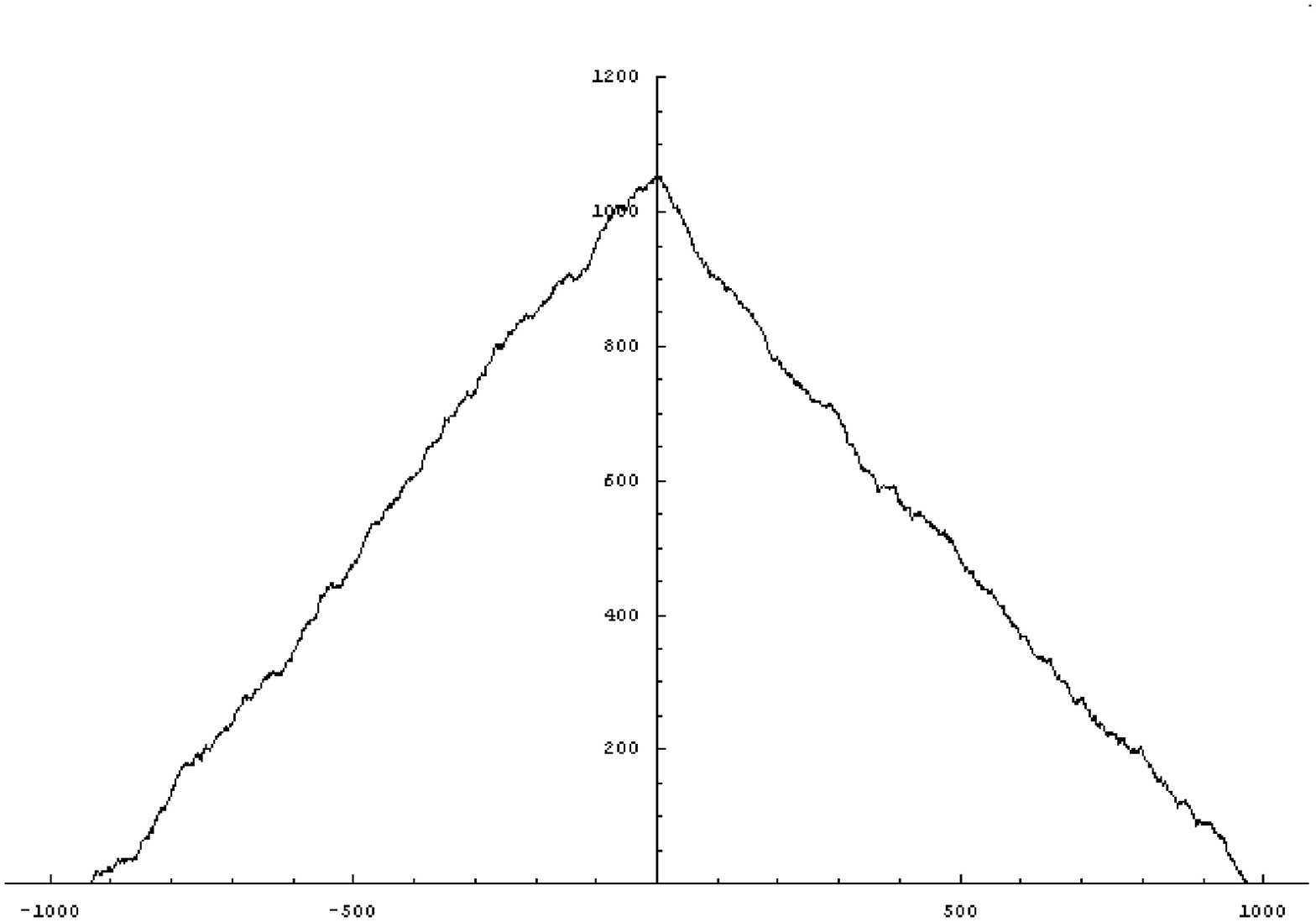}
\qquad
\includegraphics[width=200pt]{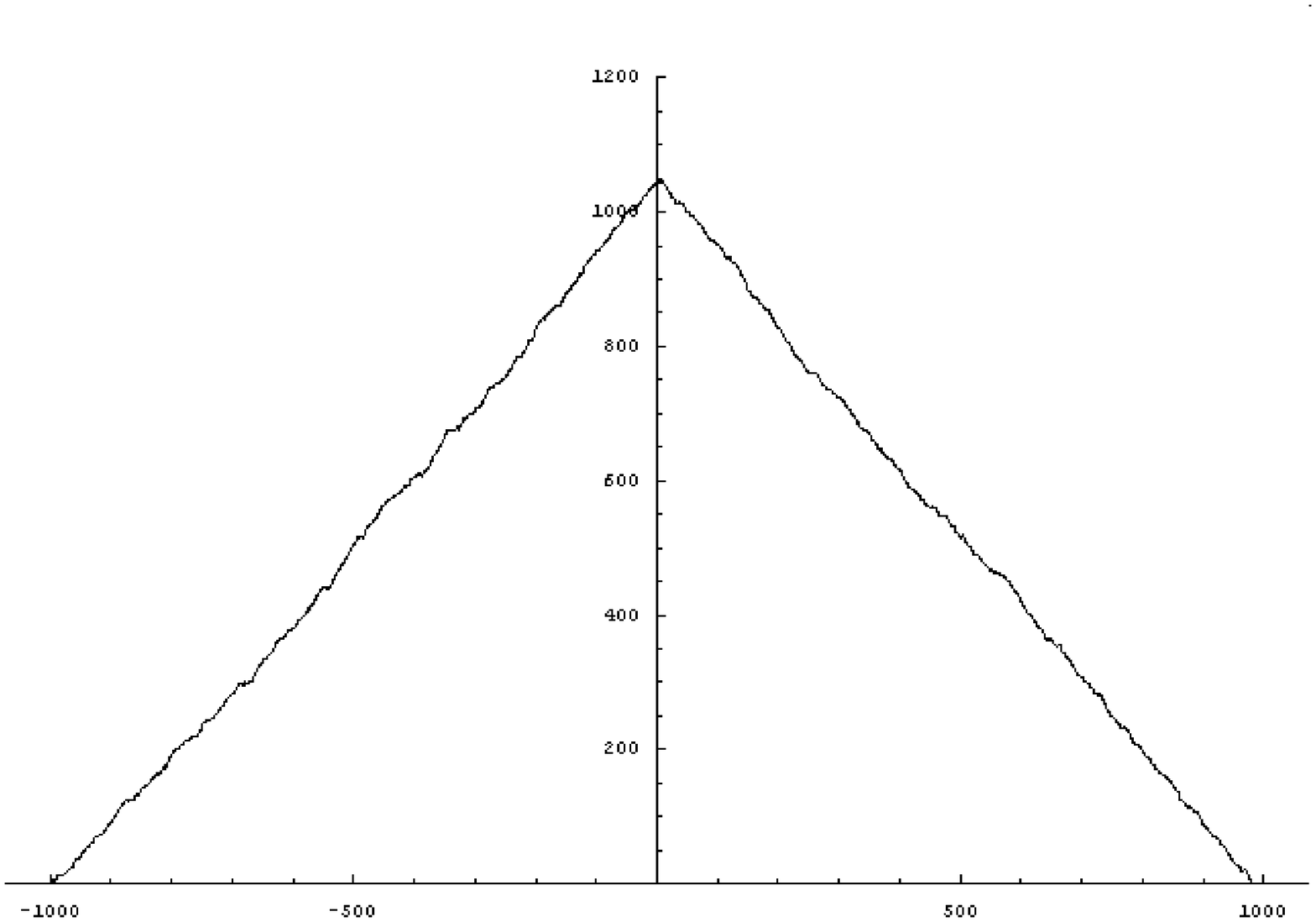}
\caption{The local time process of the random walk with
$w(k)=2^k$ and
$w(k)=10^k$
\label{loc}}
\end{figure}

Figure \ref{loc} shows the local time process of the random walk after
approximately
$10^6$
steps. More precisely, we have plotted the value of
$\Lambda^+_{100,800}$
with
$w(k)=2^k$
and
$w(k)=10^k$
respectively. One can see that the limits are the same in the two
cases -- according to Theorem \ref{thmSconv} -- but the rate of
convergence does depend on the choice of
$w$.
We can conclude the empirical rule that the faster the weight function
grows at infinity, the faster the convergence of the local time
process is.

\begin{figure}
\centering
\includegraphics[width=200pt]{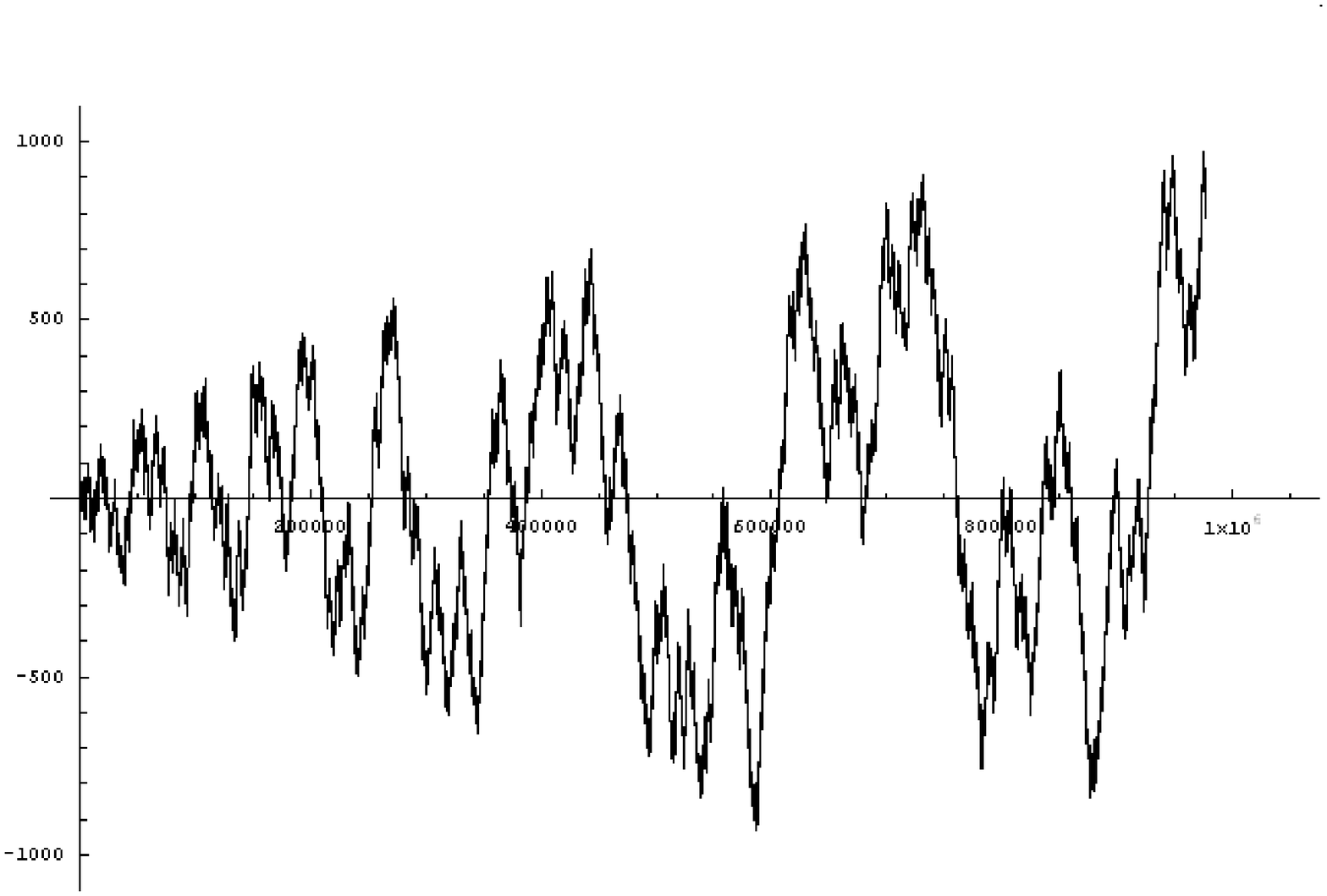}
\qquad
\includegraphics[width=200pt]{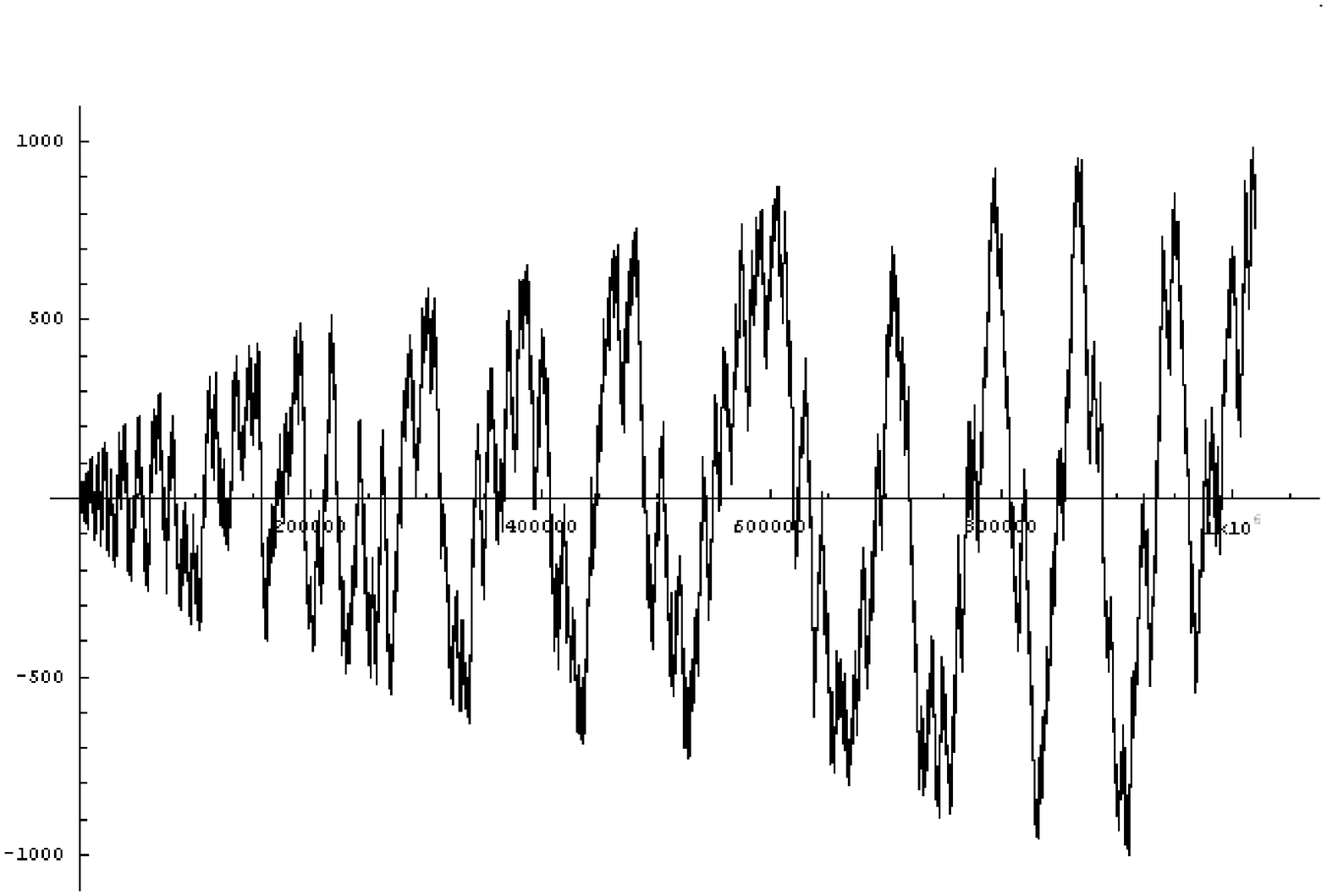}
\caption{The trajectories of the random walk with
$w(k)=2^k$
and
$w(k)=10^k$
\label{traj}}
\end{figure}

The difference between the trajectories of random walks generated with
various weights is more conspicuous. On Figure \ref{traj}, the
trajectories of the walks with
$w(k)=2^k$
and
$w(k)=10^k$
are illustrated, respectively. The number of steps is random, it is
about
$10^6$.
The data comes from the same sample as that shown on Figure \ref{loc}.

The first thing that we can observe on Figure \ref{traj} is that the
trajectories draw a sharp upper and lower hull according to
$\sqrt t$
and
$-\sqrt t$,
which agrees with our expectations after \eqref{detlim}. On the  other
hand, the trajectories oscillate very heavily between their extreme
values, especially in the case
$w(k)=10^k$,
there are almost but not quite straight crossings from
$\sqrt t$
to
$-\sqrt t$
and back. It shows that there is no continuous scaling limit of the
self-repelling random walk with directed edges.

The shape of the trajectories are slightly different in the cases
$w(k)=2^k$
and
$w(k)=10^k$.
The latter has heavier oscillations, because it corresponds to a
higher rate of growth of the weight function. Note that despite this
difference in the oscillation, the large scale behaviour is the same
on the two pictures on Figure \ref{traj}. The reason for this is that
if the random walk explores a new region, e.g.\ it exceeds its earlier
maximum, then the probability of the reversal does not depend on
$w$,
since the both outgoing edges have local time
$0$.
It can be a heuristic argument, why the upper and lower hulls
$\sqrt t$
and
$-\sqrt t$
are universal.

\vskip1cm

\subsubsection*{Acknowledgement}
We thank P\'eter M\'ora for his help in the computer simulations. The
research work of the authors is partially supported by the following OTKA
(Hungarian National Research Fund) grants: K 60708 (for B.T.\ and
B.V.), TS 49835 (for B.V.). B.V.\ thanks the kind hospitality of the Erwin
Schr\"odinger Institute (Vienna) where part of this work was done.

\bibliography{hazteto_bib}
\bibliographystyle{plain}

\vfill \hbox{ \phantom{.}\hskip6cm \vbox{\hsize=20em

\noindent Address of authors:\\[5pt]
Institute of Mathematics\\
Budapest University of Technology\\
Egry J\'ozsef u. 1\\
H-1111 Budapest, Hungary\\[5pt]
e-mail:\\
{\tt balint@math.bme.hu\\vetob@math.bme.hu} }}

\end{document}